\newtheorem{theorem}{Theorem}[section]
\newtheorem{corollary}{Corollary}
\newtheorem{lemma}[theorem]{Lemma}
\newtheorem{proposition}{Proposition}
\newtheorem{definition}[theorem]{Definition}
\newtheorem{remark}{Remark}
\newcommand{\R}{{\mathbb R}}
\newcommand{\T}{{\mathbb T}}                   
\newcommand{\N}{{\mathbb N}}                  
\newcommand{\C}{{\mathbb C}}                  
\newcommand{\F}{{\mathbb F}}
\newcommand{\D}{{\mathbb D}}
\newcommand{\Ss}{{\mathbb S}}
\def\ka{\kappa}
\def\al{\alpha}
\def\om{\omega}
\def\ga{\gamma}
\def\Si{\Sigma}
\def\la{\lambda}
\def\t{\tau}
\def\calR{{\mathcal{R}}}
\def\calL{{\mathcal{L}}}
\def\calA{{\mathcal{A}}}
\def\calC{{\mathcal{C}}}
\def\calD{{\mathcal{D}}}
\def\calP{{\mathcal{P}}}
\def\R{\mathbb R}
\def\P{\mathbb P}
\def\D{\mathbb D}
\def\C{\mathbb C}
\def\N{\mathbb N}
\def\F{\mathbb F}
\def\T{\mathbb T}
\def\G{\mathbb G}
\begin{document}



\keywords {Maximal $L^p$-regularity, Unbounded perturbations, Boundary value problems, Operator semigroups, UMD spaces, Nonreflexive space}

\subjclass[msc2010]{35K90, 47D06, 93C05, 93C73}%



\title[Maximal $L^p$-regularity]{Staffans-Weiss perturbations for Maximal $L^p$-regularity in Banach spaces}


\author[A. Amansag]{Ahmed Amansag}
\address{Department of Mathematics, Faculty of Sciences, Ibn Zohr University, Hay Dakhla, BP8106, 80000--Agadir, Morocco}
\author[H. Bounit]{Hamid Bounit}

\author[A. Driouich]{Abderrahim Driouich}

\author[S. Hadd]{Said Hadd}

\maketitle

\begin{abstract}
  In this paper we show that the concept of maximal $L^p$-regularity
is stable under a large class of unbounded perturbations, namely Staffans-Weiss perturbations. To that purpose, we first prove that the analyticity of semigroups is preserved under this class of perturbations, which is a necessary condition for the maximal regularity. In UMD spaces, $\calR$-boundedness conditions are exploited to give conditions guaranteing the maximal regularity. For non-reflexive Banach space,  a condition is imposed to the Dirichlet operator associated  to the boundary value problem to prove the maximal regularity. A Pde example illustrating the theory and an application to a class of non-autonomous perturbed boundary value problems are presented.
\end{abstract}





\section{Introduction}\label{sec:intro}
	
	In this paper we investigate the maximal $L^p$--regularity of evolution equations of the type
	\begin{equation}\label{PM} \begin{cases} \dot{z}(t)=A_m z(t)+Pz(t)+f(t),& t\ge 0,\cr z(0)=0,\cr Gz(t)=Kz(t),& t\ge 0,\end{cases}\end{equation}
	where $A_m:Z\subset X\to X$ is a linear closed operator in a Banach space $X$ with domain $D(A_m)=Z,$ $P:Z\to X$ is an additive linear perturbation of $A_m,$  $G,K:Z\to U$ are linear boundary operators  ($U$ is another Banach space) and $f\in L^p(\R^+,X)$ with $p\ge 1$ is a real number. Actually, we assume that $A:=A_m$ with domain $D(A)=\ker(G)$ is a generator of a strongly continuous semigroup $\T:=(\T(t))_{t\ge 0}$ on $X$.

	The concept of maximal regularity has been the subject of several works for many years, e.g.  \cite{CanVes,Dore,CoulDuon,Simon}, and the monograph  \cite{DenHiePru}. The main purpose of these works is to give sufficient conditions on the operator $A$ so as the problem \eqref{PM} with $P\equiv 0$ and $K\equiv 0,$ which can be written as  \begin{equation}\label{A-only}\dot{z}=Az+f,\;z(0)=0,\end{equation} has a maximal $L^p$--regularity. A necessary condition for the  maximal regularity of the evolution equation \eqref{A-only} is that $A$ is a generator of an analytic semigroup. This condition is also sufficient only if $X$ is a Hilbert space, see \cite{Simon}. In the case of UMD Banach space $X,$ Weis \cite{LutzWeis2} introduced necessary and sufficient conditions in terms of $\calR$-boundedness of the operator $A$ for the maximal $L^p$-regularity of \eqref{A-only}.
	
	Return now to our initial boundary problem \eqref{PM}. This later can be reformulated as
	\begin{equation}\label{calA-max} \dot{z}=(\calA+P)z+f,\qquad z(0)=0,\end{equation}
	where $\calA: D(\calA)\subset X\to X$ is the linear operator defined by
	\begin{equation*} \calA:=A_m,\qquad D(\calA)=\{x\in Z: Gx=Kx\}.\end{equation*}
	To study maximal regularity of the problem \eqref{calA-max}, we first study maximal regularity of the problem \begin{equation}\label{calA-only}\dot{z}=\calA z+f,\quad z(0)=0.\end{equation}
	
	In addition to our assumption at the beginning  of this section, we also suppose that $G:Z\to U$ is surjective. Let then $\D_\la\in\calL(U,Z)$ ($\la\in\rho(A)$) be the Dirichlet operator associated with $A_m$ and $G$, see Section \ref{sec:3}. In order to state our main results on well-posedness and maximal $L^p$-regularity of the problem \eqref{calA-max}, we select $B:=(\la I-A_{-1})\D_\la\in\calL(U,X_{-1})$ for $\la\in\rho(A)$, where $X_{-1}$ is the extrapolation space associated to $A$ and $X$, and $A_{-1}:X\to X_{-1}$ is the extension of $A$ to $X,$ which is a generator of a strongly continuous semigroup on $X_{-1}$. We assume that $B$ is a $p$-admissible control operator for $A$, see the next section for the definition and notation. The case where $K$ is bounded, i.e. $K\in\calL(X,U)$, is studied in the recent paper \cite{AmBoDrHa}. In the present paper,we assume that the boundary operator $K$ is unbounded, that is $K:Z\varsubsetneq X\to U$. This situation is quite difficult which needs additional assumption to treat the well-posedness and  maximal $L^p$-regularity. According to \cite{HaddManzoRhandi}, if we assume that $(A,B,K_{|D(A)})$ is regular on $X,U,U$ with $I_U:U\to U$ as an admissible feedback, then the problem \eqref{calA-only} is well-posed on the Banach space $X$. Moreover, if the problem \eqref{A-only} has a  maximal $L^p$-regularity and $\|\la \D_\la\|\le \kappa$ for any $\la>\la_0,$ where $\la_0\in\R$ and $\kappa>0$ are constants, then the problem \eqref{calA-only} has also the maximal $L^p$-regularity on a non reflexive Banach space $X$, see Theorem \ref{theorem:Staffans_Weiss_Non_Reflexive_Case}. On the other hand, we assume that  $(A,B,P_{|D(A)})$ generates a regular linear system on $X,U,X$. Then, in Theorem \ref{theorem:generalisation_of_Hadd_Manzo_Rhandi}, we prove that the problem \eqref{calA-max} is well-posed. Corollary \ref{SF-big-equa-result} shows that the problem \eqref{calA-max} has the  maximal $L^p$-regularity. If $X$ is a UMD space then we use $\calR$-boundedness to prove the  maximal $L^p$-regularity for the evolution equation \eqref{calA-max}, see Theorem \ref{theorem:staffans-weiss-R-boundedness} and Corollary \ref{SF-big-equa-result}. We mention that in \cite{HaHaKu}, the authors proved perturbation theorems for sectoriality and $\calR$-sectoriality in general Banach spaces. They gives conditions on intermediate spaces $Z$ and $W$ such that, for an operator $S:Z \rightarrow W$ of small norm, the operator $A+S$ is sectorial (resp. $\calR$-sectorial) provided $A$ is sectorial (resp. $\calR$-sectorial). Their results are obtained by factorizing $S=BC$. As $\calR$-sectoriality implies maximal regularity in UMD spaces, these theorems yield to maximal regularity perturbation only in UMD spaces.
	
	In the next section, we first recall the necessary material about feedback theory of infinite dimensional linear systems. We then use this theory to prove the well-posedness of the evolution equation \eqref{PM} in Section \ref{sec:3}. Our main results on maximal $L^p$--regularity for the problem \eqref{PM} are gathered in Section \ref{sec:umd} and Section \ref{sec:nonreflexive}. The last section is devoted to apply the obtained results to a class of non-autonomous boundary problems.
	
	{\bf Notation.} Hereafter $p,q\in [1,\infty]$ and $T>0$ are real numbers such that $\frac{1}{p}+\frac{1}{q}=1$. If $X$ is a Banach space, we denote by $L^p([0,T];X)$ the space of all $X$-valued Bochner integrable functions. For any $\theta \in (0,\pi)$,  $\Sigma_\theta$ is the following sector:
	$$
	\Sigma_\theta:=\{z\in \mathbb{C}^*; \vert arg z\vert < \theta \}.
	$$
	For any $\alpha \in \mathbb{R}$, the right half-plane is defined by
	$$
	\mathbb{C}_\alpha=:\{z\in \mathbb{C}; Re z>\alpha\}.
	$$
	Given a semigroup $\T:=(\T(t))_{t\geq 0}$ generated by an operator $A:D(A)\subset X\to X$, we will always denote by $\omega_0(\mathbb{T})$(or $\omega_0(A)$) the growth bound of this semigroup. The resolvent set of $A$ is denoted by $\rho(A)$. Preferably, we denote the resolvent operator of $A$ by $R(\la,A):=(\la-A)^{-1}$ for any $\la\in\rho(A),$ where the notation $\la-A$ means $\la I-A$.


\section{Feedback theory of infinite dimensional linear systems}\label{sec:2}
	In this section, we gather definitions and results from feedback theory of infinite dimensional linear systems mainly developed in the references \cite{Sala,Staff,TucWei,WeiRegu}.  We also give some new development of this theory. Hereafter, $X$ and $U$ are Banach spaces and $p\in [1,\infty[$.
	
	It is known (see e.g. \cite{Sala,Staff}) that partial differential equations with boundary control and point observation can be reformulated as the following distributed linear system
	\begin{equation}
	\label{linContSys}
	\begin{cases}
	\dot{x}(t) = Ax(t)+Bu(t) , & \mbox{ } t\geq 0 \\
	y(t) = Kx(t), & \mbox{}\\
	x(0)=x_0, & \mbox{}
	\
	\end{cases}
	\end{equation}
	where $A:D(A)\subset Z\subset X\to X$ is the generator of a strongly continuous semigroup $\T:=(\T(t))_{t\ge 0}$ on $X$ with $Z$ is a Banach space continuously and densely embedded in $X$, $B\in\calL(U,X_{-1})$ is a control operator such that
	\begin{equation*}
	R(\la,A_{-1})B\in\calL(U,Z),\qquad \la\in\rho(A),
	\end{equation*}
	and $K\in\calL(Z,U)$ is an observation operator. Here  $X_{-1}$ is the completion of $X$ with respect to the norm $\Vert R(\lambda,A)\cdot\Vert$. We recall that we can extend $\T$ to another strongly continuous semigroup $\T_{-1}:=(\T_{-1}(t))_{t\geq 0}$ on $X_{-1}$ with  generator $A_{-1}:X\to X_{-1}$, the extension of $A$ to $X$ (see e.g. \cite[chap.2]{EngNag}).
	The mild solution of the system \eqref{linContSys} is given by:
	\begin{equation}
	\label{solVarConstForm}
	x(t) = \T(t)x_0 + \int_0^t \T_{-1}(t-s)Bu(s) ds \qquad x_0 \in X,
	\end{equation}
	where the integral is taken in $X_{-1}$. Formally, the well-posedness of the system  \eqref{linContSys} means that the state satisfies $x(t)\in X$ for any $t\ge 0,$ the observation function $y$ is extended to a locally $p$-integrable function  $y\in L^p_{loc}([0,\infty),U)$ satisfying the following property:  for any $\t>0,$ there exists a constant $c_\tau>0$ such that
	\begin{align*}
	\|y\|_{L^p([0,\t],U)}\le c_\t \left(\|x_0\|+\|u\|_{L^p([0,\t],U)}\right),
	\end{align*}
	for any initial state $x_0\in X$ and any control function $u\in L^p_{loc}([0,\infty),U)$. In order to mathematically explain this concept, let us define
	\begin{equation*}
	C:=Ki\in\calL(D(A),U).
	\end{equation*}
	where $i$ is the canonical injection from $D(A)$ to $Z$.\\\\
	We also need the following definition.
	\begin{definition}
		\begin{itemize}
			\item[(i)]  $B\in \mathcal{L}(U,X_{-1})$ is called $p$-admissible control operator for $A$, if there exists $t_0 > 0$ such that :
			\begin{equation*}
			\Phi_{t_0} u := \int_0^{t_0} \T_{-1}(t-s)Bu(s) ds \in X
			\end{equation*}
			for any $u\in L^p_{loc}([0,\infty),U)$. We also say that $(A,B)$ is $p$-admissible.
			\item[(ii)] $C\in \mathcal{L}(D(A),Y)$ is called $p$-admissible observation operator for $A$, if there exist $\alpha >0$ and $\kappa:=\kappa_\al>0$ such that:
			\begin{equation}\label{walid}
			\int_0^\alpha \Vert C\T(t)x\Vert_Y^p dt \leq \ka^p \Vert x \Vert^p,
			\end{equation}
			for all $x\in D(A)$. We also say that $(C,A)$ is $p$-admissible.
		\end{itemize}
	\end{definition}
	Let us now describe some consequences of this definition. If $B$ is $p$-admissible control operator for $A,$ then by the closed graph theorem one can see that for any $t\ge 0,$ $$\Phi_t\in\calL(L^p([0,t],U),X).$$
	This implies that the state of the system  \eqref{linContSys} satisfies $x(t)=\T(t)x_0+\Phi_t u\in X$ for any $t\ge 0,\;x_0\in X$ and $u\in L^p_{loc}([0,\infty),U)$.  According to \cite{Weis-ad-cont}, for all $0<\t_1\le \t_2,$
	\begin{equation}\label{norm-phi-estim}
	\|\Phi_{\t_1}\|\le \|\Phi_{\t_2}\|.
	\end{equation} Now if $C$ is $p$-admissible observation operator for $A,$ then due to \eqref{walid}, the map $\Psi_\infty: D(A)\to L^p_{loc}([0,\infty),U)$ defined by $\Psi_\infty x := C\T(\cdot)x$, can be  extended to a bounded operator $\Psi_\infty: X\rightarrow L_{loc}^p([0,\infty);U)$. For any $x\in X$ and $t\ge 0$, we define the family $\Psi_t x := \Psi_\infty x$ on $[0,t]$. Then for all $t\ge 0,$ $$\Psi_t\in \mathcal{L}\left( X, L^p_{loc}([0,\infty),U)\right).$$
	On the other hand, let us consider the linear operator
	\begin{align*}
	D(C_\Lambda)&:=\left\{x\in X: \lim_{s \rightarrow +\infty} s C R(s,A)x \;\text{exists in}\; U\right\}\\
	C_\Lambda x &:= \lim_{s \rightarrow +\infty}s C R(s,A)x.
	\end{align*}
	Clearly, $D(A)\subset D(C_\Lambda)$ and $C_\Lambda=C$ on $D(A)$. This shows that $C_\Lambda$ is in fact an extension of $C,$ called the \textit{Yosida} extension of $C$ w.r.t. $A$. We note that if $C$ is $p$-admissible for $A,$ then $Range(\T(t))\subset D(C_\Lambda)$  and $
	(\Psi_\infty x)(t)=C_\Lambda \T(t)x$
	for any $x\in X$ and a.e. $t\ge0$.
	
	In the sequel, we assume that $B$ and $C$ are $p$-admissible for $A$ and set
	\begin{equation*}
	W^{2,p}_{0,loc}([0,\infty),U):=\left\{u\in W^{2,p}_{loc}([0,\infty),U):u(0)=0\right\}.
	\end{equation*}
	This space is dense in $L^p_{loc}([0,\infty),U)$. Remark that for any $u\in W^{2,p}_{0,loc}([0,\infty),U),\; t\ge 0$ and by assuming $0\in \rho(A)$ (without loss of generality) and using an integration by parts, we have
	\begin{equation*}
	\Phi_t u=R(0,A_{-1})Bu(t)-R(0,A)\Phi_t \dot{u}\in Z.
	\end{equation*}
	On the other hand, using the fact that $KR(0,A_{-1})B\in\calL(U),\;CR(0,A)\in\calL(X,U)$ and \eqref{norm-phi-estim}, the application $(t\mapsto K\Phi_t u)\in L^p_{loc}([0,\infty),U)$ for any $u\in W^{2,p}_{0,loc}([0,\infty),U)$. Thus we have defined an application
	\begin{equation}\label{inp-out-infini}
	\F_\infty: W^{2,p}_{0,loc}([0,\infty),U)\to L^p_{loc}([0,\infty),U),\qquad u\mapsto \F_\infty u= K\Phi_{\cdot} u.
	\end{equation}
	\begin{definition}\cite{WeiAdmiss}
		Let $B$ and $C$ be $p$-admissible control and observation operators for $A,$ respectively. We say that the triple $(A,B,C)$ generates a well-posed linear system on $X,U,U$, if the operator $\F_\infty$ defined by \eqref{inp-out-infini} satisfies the following property: For any $\al>0$ there exists a constant $\vartheta_\al>0$ such that for all $u\in  W^{2,p}_{0,loc}([0,\infty),U),$
		\begin{equation}\label{F-infty-estim}
		\left\| \F_\infty u\right\|_{L^p([0,\al],U)}\le \vartheta_\al \|u\|_{L^p([0,\al],U)}.
		\end{equation}
		The operator $\F_\infty$ is called the extended input-output operator associated to $(A,B,C)$.
	\end{definition}
	If the triple $(A,B,C)$ generates a well-posed linear system on $X,U,U$, then we have two folds: first the state of \eqref{linContSys} satisfies $x(t)\in X$ for all $t\ge 0,$ and second $\F_\infty$ has an extension $\F_\infty\in \calL(L^p_{loc}([0,\infty),U)),$ due to \eqref{F-infty-estim}. Observe that the observation function $y$ verifies
	\begin{equation}\label{y-localy}
	y(\cdot):=y(\cdot;x_0,u)=CT(\cdot)x_0+\F_\infty u=(\begin{smallmatrix}\Psi_\infty& \F_\infty\end{smallmatrix})(\begin{smallmatrix}x_0\\ u\end{smallmatrix}),
	\end{equation}
	for all $x_0\in D(A)$ and $u\in  W^{2,p}_{0,loc}([0,\infty),U)$. By   density of $D(A)\times W^{2,p}_{0,loc}([0,\infty),U)$ in $X\times L^p_{loc}([0,\infty),U)$, the function $y$ is extended to a function $y\in L^p_{loc}([0,\infty),U)$ such that
	\begin{equation*}
	y=\Psi_\infty x_0+\F_\infty u,\qquad \forall (x_0,u)\in X\times L^p_{loc}([0,\infty),U).
	\end{equation*}
	We now turn out to give a representation of the observation function $y$ in terms of the observation operator $C$ and the state $x(\cdot)$. To that purpose Weiss \cite{WeiRegu,WeiTrans} introduced the following subclass of well-posed linear systems.
	\begin{definition}
		Let the triple $(A,B,C)$ generates a well-posed linear system on $X,U,U$ with extended input-output operator $\F_\infty$. Thi system is called regular (with feedthrough $D=0$) if :
		$$
		\lim_{\tau \rightarrow 0} \frac{1}{\tau} \int_0^\tau (\mathbb{F}_\infty u_{z_0})(s)ds = 0
		,$$
		with $u_{z_0}(s) = z_0$ for all $s\geq 0 $, is a constant control function.
	\end{definition}
	According to Weiss \cite{WeiRegu,WeiTrans}, if $(A,B,C)$ generates a regular system on $X,U,U$, then the state and the observation function of the linear system \eqref{linContSys} satisfy
	\begin{equation}\label{representation-y}
	x(t)\in D(C_\Lambda)\quad\text{and}\quad y(t)=C_\Lambda x(t),
	\end{equation}
	for any initial state $x(0)=x_0\in X$, any control function $u\in L^p([0,\infty),U)$ and a.e. $t\ge 0$.
	\begin{definition}\label{Feedback-admisible}
		Let a triple  $(A,B,C)$ generates a well-posed system on $X,U,U$ with   extended input-output operator $\F_\infty$. Define
		$$
		\mathbb{F}_\tau u:= \mathbb{F}_\infty u \,\qquad\text{on}\quad [0,\tau].
		$$
		The identity operator $I_U:U\to U$ is called an admissible feedback for $\Sigma$ if the operator $I-\mathbb{F}_{t_0}:L^p([0,t_0],U)\to L^p([0,t_0],U)$ admits a (uniformly) bounded inverse for some $t_0>0$ (hence all $t_0>0$).
	\end{definition}
	A consequence of Definition \ref{Feedback-admisible} is that the feedback law $u=y(\cdot;x_0,u)$ has a sense. In fact, due to \eqref{y-localy} this is equivalent to $(I-\F_{\t})u=\Psi_\t x_0$ on $[0,\t]$. As $I-\F_{\t}$ is invertible in $L^p([0,\t],U),$ then the equation $u=y(\cdot;x_0,u)$ has a unique solution and this solution $u\in L^p([0,\t],U)$ is given also by
	\begin{equation*}
	u(t)=C_\Lambda x(t),\qquad a.e.\; t\ge 0,
	\end{equation*}
	due to \eqref{representation-y}. Using \eqref{solVarConstForm}, the state $x(\cdot)$ satisfies the following variation of constants formula
	\begin{equation*}
	x(t)=\T(t)x_0+\int^t_0 \T_{-1}(t-s)BC_\Lambda x(s)ds,
	\end{equation*}
	for any $x_0\in X$ and any $t\ge 0$. Now put set
	\begin{equation*}
	\T^{cl}(t)x_0:=x(t),\qquad t\ge 0.
	\end{equation*}
	Then by using the definition of $C_0$--semigroups one can see that $(\T^{cl}(t))_{t\ge 0}$ is a $C_0$--semigroup on $X$. More precisely, we have the following perturbation theorem due to Weiss \cite{WeiRegu} in Hilbert spaces and to Staffans \cite[Chap.7]{Staff} in Banach spaces.
	\begin{theorem}\label{staff_wei}
		Let $(A,B,C)$ generates a regular linear system with the identity operator  $I_U:U\to U$ an admissible feedback operator. Then the operator
		\begin{equation} \label{A-cl}
		\begin{split}
		A^{cl} &:= A_{-1} + BC_\Lambda\\
		D(A^{cl}) &:= \{x\in D(C_\Lambda); (A_{-1} + BC_\Lambda)x\in X\}
		\end{split}
		\end{equation}
		generates a $C_0$-semigroup $(\T^{cl}(t))_{t\geq 0}$ on $X$ such that $Range(\T^{cl}(t))\subset D(C_\Lambda)$ for a.e. $t>0$, and  for any $\al>0,$ there exists $c_\al>0$ such that for all $x_0\in X,$
		\begin{equation}\label{T-cl-estim}
		\|C_\Lambda \T^{cl}(\cdot)x_0\|_{L^p([0,\al],U)}\le c_\al \|x_0\|.
		\end{equation}
		Moreover, this semigroup satisfies
		\begin{equation}\label{WS-VCF}
		\T^{cl}(t)x_0 = \T(t)x_0 + \int_0^t \T_{-1}(t-s)BC_{\Lambda}\T^{cl}(s)x_0 ds \qquad \qquad x_0\in X\ ,\ t \geq 0.
		\end{equation}
		In addition  $(A^{cl},B,C_\Lambda)$ generates a regular system.
	\end{theorem}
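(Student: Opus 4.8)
The plan is to build the closed-loop semigroup directly from the feedback law and then read off every assertion from that construction. Fix $x_0\in X$. Since $I_U$ is an admissible feedback, $I-\F_{t_0}$ is boundedly invertible on $L^p([0,t_0],U)$ for every $t_0>0$, and these inverses are compatible under restriction, so they patch together to a bounded operator $(I-\F_\infty)^{-1}$ on $L^p_{loc}([0,\infty),U)$. I would then set
\[
u:=(I-\F_\infty)^{-1}\Psi_\infty x_0,\qquad \T^{cl}(t)x_0:=\T(t)x_0+\Phi_t u,
\]
so that $u$ solves the feedback equation $u=\Psi_\infty x_0+\F_\infty u$; that is, $u$ is simultaneously the input and the output of the open-loop system \eqref{linContSys} started at $x_0$. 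Regularity of $(A,B,C)$ and \eqref{representation-y} then give $\T^{cl}(t)x_0\in D(C_\Lambda)$ for a.e.\ $t\ge0$ and $u(t)=C_\Lambda\T^{cl}(t)x_0$; substituting this back into \eqref{solVarConstForm} yields the variation of constants formula \eqref{WS-VCF}, and the identity $C_\Lambda\T^{cl}(\cdot)x_0=(I-\F_\al)^{-1}\Psi_\al x_0$ on $[0,\al]$ gives \eqref{T-cl-estim} with $c_\al:=\|(I-\F_\al)^{-1}\|\,\|\Psi_\al\|$.

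\emph{$\T^{cl}$ is a $C_0$-semigroup.} Linearity of $x_0\mapsto\T^{cl}(t)x_0$ and local boundedness in $t$ are immediate from the construction together with \eqref{norm-phi-estim}, and strong continuity at $0$ follows from $\T(t)x_0\to x_0$ and $\|\Phi_t u\|\le\|\Phi_\al\|\,\|u\|_{L^p([0,t],U)}\to0$. The one genuinely nontrivial point is the semigroup law $\T^{cl}(t+s)=\T^{cl}(t)\T^{cl}(s)$: it must be derived from the causality and time-invariance of the well-posed system $(A,B,C)$, i.e.\ from the cocycle identities relating $\Phi$, $\Psi$, $\F$ on $[0,t+s]$ to their pieces on $[0,s]$ and $[s,t+s]$, so that solving the feedback problem from $\T^{cl}(s)x_0$ at time $s$ reproduces $\T^{cl}(s+\cdot)x_0$. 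I expect this to be the main obstacle; it is the Banach-space argument of Staffans \cite[Chap.~7]{Staff} and has to be run with the shift operators rather than imported verbatim from the Hilbert-space proof.

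\emph{Identifying the generator.} I would Laplace-transform \eqref{WS-VCF}: for $\mathrm{Re}\,\la$ large set $R_\la x_0:=\int_0^\infty e^{-\la t}\T^{cl}(t)x_0\,dt$. Since $C_\Lambda$ is the Yosida extension of a $p$-admissible observation operator one has $\widehat{C_\Lambda\T^{cl}(\cdot)x_0}(\la)=C_\Lambda R_\la x_0$, and the transform of \eqref{WS-VCF} reads $R_\la x_0=R(\la,A)x_0+R(\la,A_{-1})B\,C_\Lambda R_\la x_0$. Hence $R_\la x_0\in D(C_\Lambda)$ and $(\la-A_{-1})R_\la x_0-BC_\Lambda R_\la x_0=x_0\in X$, i.e.\ $R_\la x_0\in D(A^{cl})$ and $(\la-A^{cl})R_\la x_0=x_0$. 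As $R_\la$ is the resolvent of the generator $G$ of $\T^{cl}$, this gives $G\subseteq A^{cl}$, and checking that $\la-A^{cl}$ is injective (which is where the admissible-feedback hypothesis re-enters) upgrades it to $G=A^{cl}$ with the domain \eqref{A-cl}; the inclusion $Range(\T^{cl}(t))\subset D(C_\Lambda)$ for a.e.\ $t$ was already noted above.

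\emph{Regularity of $(A^{cl},B,C_\Lambda)$.} Admissibility of $C_\Lambda$ for $A^{cl}$ is exactly \eqref{T-cl-estim}. Admissibility of $B$ for $A^{cl}$ follows by repeating the construction with an external input $v$: the closed-loop input map then equals $\Phi_t(I-\F_\infty)^{-1}$ composed with bounded operators, hence takes values in $X$. The resulting closed-loop extended input-output operator is $\F^{cl}_\infty=(I-\F_\infty)^{-1}\F_\infty$, which inherits the estimate \eqref{F-infty-estim} from $\F_\infty$ and, since $\F_\infty$ has zero feedthrough, is itself regular with feedthrough $0$; therefore $(A^{cl},B,C_\Lambda)$ generates a regular linear system, which completes the plan.
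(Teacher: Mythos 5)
The paper does not prove Theorem \ref{staff_wei} at all: it is imported verbatim from the literature, with the Hilbert-space case attributed to Weiss \cite{WeiRegu} and the Banach-space case to Staffans \cite[Chap.~7]{Staff}. So there is no in-paper argument to compare yours against; what you have written is, in outline, precisely the standard construction from those references: close the loop by solving $u=\Psi_\infty x_0+\F_\infty u$ via $(I-\F_\infty)^{-1}$, define $\T^{cl}(t)x_0=\T(t)x_0+\Phi_t u$, use regularity and \eqref{representation-y} to get $u=C_\Lambda\T^{cl}(\cdot)x_0$ and hence \eqref{WS-VCF} and \eqref{T-cl-estim}, identify the generator by Laplace transform, and read off regularity of the closed loop from $\F^{cl}_\infty=(I-\F_\infty)^{-1}\F_\infty$ and the vanishing of the transfer function at $+\infty$. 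That architecture is correct, and you are right that the admissible-feedback hypothesis enters exactly where you say it does (invertibility of $I-\F_\tau$ and of $I-H(\la)$ for large $\mathrm{Re}\,\la$).

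Two steps of your plan are still only plans, and they are the ones that carry the weight of the theorem. First, the semigroup law: you correctly flag that it must come from the causality and time-invariance (cocycle) identities for $\Phi$, $\Psi$, $\F$, but you do not run the argument; this is the bulk of Staffans's Chapter 7 and cannot be waved through, since it is also what guarantees that the patched inverse $(I-\F_\infty)^{-1}$ on $L^p_{loc}$ interacts correctly with time shifts. Second, in the generator identification you use $\widehat{C_\Lambda\T^{cl}(\cdot)x_0}(\la)=C_\Lambda R_\la x_0$, i.e.\ you pull the Yosida extension $C_\Lambda$ through a Bochner integral. $C_\Lambda$ is not a closed operator in general, so this interchange is not free; it requires the admissibility of $C_\Lambda$ for the closed-loop semigroup (essentially \eqref{T-cl-estim}) together with an Abelian averaging argument as in Weiss. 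Neither point is a wrong turn --- both are exactly the lemmas the cited sources supply --- but as written your proposal is a faithful roadmap to the Weiss--Staffans proof rather than a self-contained proof.
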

	
	\begin{definition}\label{Staffans-Weiss-perturbation-oper}
		Let $(A,B,C)$ generates a regular linear system on $X,U,U$ with the identity operator  $I_U:U\to U$ as an admissible feedback. The operator $$\P^{sw}:=BC:D(C_\Lambda)\subset X\to X_{-1}$$ is called the {\bf Staffans-Weiss perturbation} of $A$.
	\end{definition}
	
	It is not difficult to see that if one of the operators $B$ or $C$ is bounded (i.e. $B\in\calL(U,X)$ or $C\in\calL(X,U)$) and the other is $p$-admissible then the triple $(A,B,C)$ generates a regular linear system on $X,U,U$ with the identity operator $I_U:U\to U$ as an admissible feedback. As application of the Staffans-Weiss theorem (Theorem \ref{staff_wei}), we distinct two subclasses of perturbations as follows:
	\begin{remark}\label{Hadd_thm}
		\begin{itemize}
			\item [{\rm (i)}] We take $B\in \calL(X,U)$ and $C\in\calL(D(A),U)$ a $p$-admissible observation operator for $A$. According to Theorem \ref{staff_wei}, the operator $A^{cl}:=A+BC$ with domain $D(A^{cl})=D(A)$ is a generator of a strongly continuous semigroup $\T^{cl}:=(\T^{cl}(t))_{t\ge 0}$ on $X$ such that $\T^{cl}(t)X\subset D(C_\Lambda)$ for a.e. $t>0,$ the estimate \eqref{T-cl-estim} holds, and
			\begin{equation}\label{MV-VCF1}
			\T^{cl}(t)x=\T(t)x+\int^t_0 \T(t-s)BC_\Lambda \T^{cl}(s)xds,\qquad t\ge 0,\; x\in X.
			\end{equation}
			On the other hand, it is shown in \cite{Hadd}, that the semigroup $\T^{cl}$ satisfy also the following formula
			\begin{equation}\label{MV-VCF2}
			\T^{cl}(t)x=\T(t)x+\int^t_0 \T^{cl}(t-s)BC_\Lambda \T(s)xds,\qquad t\ge 0,\; x\in X.
			\end{equation}
			Using H\"{o}lder inequality one can see that there exist $\al_0>0$ and $\ga\in (0,1)$ such that
			\begin{equation*}
			\int^{\al_0}_0 \|BC\T(t)x\|dt\le \ga \|x\|,
			\end{equation*}
			for all $x\in D(A)$.\\ The following operator
			$$\P^{mv}:=BC: D(A)\to X $$
			is called a {\bf Miyadera-Voigt perturbation} for $A;$ (see e.g. \cite[p.195]{EngNag}).
			\item [{\rm (ii)}] We take $C\in\calL(X,U)$ and $B\in\calL(U,X_{-1})$ a $p$-admissible control operator for $A$. Then the part of the operator $A_{-1}+BC$ in $X$ generates a strongly continuous semigroup on $X$ satisfying all properties of Theorem \ref{staff_wei}. In this case the operator $$\P^{ds}:=BC:X\to X_{-1}$$ is called {\bf Desch-Schappacher perturbation} for $A$ (see e.g. \cite[p.182]{EngNag}).
		\end{itemize}
	\end{remark}


\section{Well-posedness of perturbed boundary value problems}\label{sec:3}
	The object of this section is to investigate the well-posedness of the perturbed boundary value problem defined by \eqref{PM}. We first rewrite \eqref{PM} as non-homogeneous perturbed Cauchy problem of the form \eqref{calA-max}. Then the  well-posedness of \eqref{PM} can be obtained if for example the operator
	\begin{equation}\label{cald-def} \calA:=A_m,\qquad D(\calA)=\{x\in Z: Gx=Kx\},\end{equation}  generates a strongly continuous semigroup on $X$ and that $P$ is a $p$-admissible observation operator for $A$ (see Remark \ref{Hadd_thm} (i)). Recently, the authors of \cite{HaddManzoRhandi} introduced conditions on $A_m,G$ and $K$ for which $\calA$ is a generator. To be more precise, assume that
	\begin{itemize}
		\item[] \textbf{(H1)} $G:Z\to U$ is onto,
		\item[] \textbf{(H2)} The operator defined by $A:=A_{m}$ and $D(A):=\ker(G)$, generates a $C_0$-semigroup $(\T(t))_{t\geq 0}$ on $X$.
	\end{itemize}
	According to  Greiner \cite{Grei}, these conditions imply that for any $\la\in\rho(A)$ the restriction of $G$ to $\ker(\la-A_m)$ is invertible. We then define
	\begin{equation*}
	\D_\la:=(G _{|Ker(\lambda-A_m}))^{-1}\in\calL(U,X),\qquad \la\in\rho(A).
	\end{equation*}
	This operator is called the {\em Dirichlet operator}.  Define the operators :
	\begin{align}\label{B_op}
	\begin{split}
	B:= & (\lambda - A_{-1})\mathbb{D}_\lambda \in \mathcal{L}(U,X_{-1}), \\
	C:=  & Ki \in \mathcal{L}(D(A),U),
	\end{split}
	\end{align}
	where $i$ is the canonical injection from $D(A)$ to $Z$.\\ Using the resolvent equation, and the fact that for all $\lambda,\mu\in \rho(A) $ (see \cite[Lemma 1.3]{Grei})
	\begin{equation}\label{GreiRela}
	\mathbb D_\lambda=(I-(\lambda -\mu)R(\mu, A))\mathbb D_\mu,
	\end{equation}
	it is easy to see that the operator $B$ does not depend on $\lambda$ .\\\\
	 In the rest of this paper, $C_\Lambda$ denotes the Yosida extension of $C$ with respect to $A$.
	
	It is shown in \cite[Lemma.3.6]{HaddManzoRhandi} that if $A,B,C$ as above and if $(A,B,C)$ generates a regular linear system on $X,U,U,$ then we have
	\begin{equation}\label{relation-K-C-Lambda}
	Z\subset D(C_\Lambda)\quad\text{and}\quad (C_\Lambda)_{|Z}=K.
	\end{equation}
	If $H$ is the transfer function of this system and $\al>\om_0(A)$ then
	\begin{equation}\label{transfer-function-ABC}
	H(\la)=C_\Lambda R(\la,A_{-1})B=C_\Lambda \D_\la=K\D_\lambda,
	\end{equation}
	for any $\la\in\C$ with ${\rm Re}\la >\al$. Moreover, we have
	\begin{equation}\label{limit-trans-func}
	\lim_{s\to+\infty}H(s)=0.
	\end{equation}
	We also assume the following
	\begin{itemize}
		\item[] \textbf{(H3)} the triple $(A,B,C)$ generates a regular linear system on $X,U,U$ with \qquad $I_U:U\to  U$ as an admissible feedback operator.
	\end{itemize}
We have the following perturbation theorem (see \cite{HaddManzoRhandi} for the proof).
	\begin{theorem}\label{theorem:Hadd_Manzo_Rhandi}
		Let assumptions {\rm \textbf{(H1)}} to {\rm \textbf{(H3)}} be satisfied. Then, the following assertions hold:
		\begin{itemize}
			\item [{\rm (i)}]The operator $(\calA,D(\calA))$ defined by \eqref{cald-def} coincides with the following operator
			\begin{equation*}
			A^{cl}:=A_{-1}+BC_\Lambda,\quad D(A^{cl})=\{x\in D(C_\Lambda):(A_{-1}+BC_\Lambda)x\in X\}.
			\end{equation*}
			\item [{\rm (ii)}] The operator $(\calA,D(\calA))$ generates a strongly continuous semigroup $(\T^{cl}(t))_{t\geq 0}$ on $X$ as in Theorem \ref{staff_wei}.
			\item [{\rm (iii)}] For any $\lambda\in\rho(A)$ we have
			\begin{equation*}
			\lambda\in\rho(\calA) \Leftrightarrow 1 \in \rho(\mathbb{D}_\lambda K) \Leftrightarrow 1 \in \rho(K\mathbb{D}_\lambda).
			\end{equation*}
			\item [{\rm (iv)}] Finally for $\lambda \in \rho(A)\cap \rho(\calA)$:
			\begin{equation*}
			R(\lambda,\calA) = (I-\mathbb{D}_\lambda K)^{-1}R(\lambda,A).
			\end{equation*}
		\end{itemize}
	\end{theorem}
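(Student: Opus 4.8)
The plan is to derive the four assertions from a single factorisation of $A_m$ on $Z$, and then to read off (ii)--(iv) with the help of Theorem \ref{staff_wei} and the Dirichlet operator. The cornerstone is the identity
\[
A_m x=A_{-1}x+B\,Gx\qquad (x\in Z),
\]
valid in $X_{-1}$: fixing $\mu\in\rho(A)$ and writing $x=(x-\mathbb{D}_\mu Gx)+\mathbb{D}_\mu Gx$, the first summand lies in $\ker G=D(A)$ (since $G\mathbb{D}_\mu=I_U$) and the second in $\ker(\mu-A_m)$, so applying $\mu-A_{-1}$ to the second summand and using $B=(\mu-A_{-1})\mathbb{D}_\mu$ gives the identity. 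For \textbf{(i)}: if $x\in D(\calA)$ then $x\in Z$ and, by \eqref{relation-K-C-Lambda}, $C_\Lambda x=Kx=Gx$, hence $\calA x=A_m x=(A_{-1}+BC_\Lambda)x\in X$, so $x\in D(A^{cl})$ with $A^{cl}x=\calA x$; conversely, if $x\in D(A^{cl})$, put $y:=(A_{-1}+BC_\Lambda)x\in X$ and apply $R(\mu,A_{-1})$ to $(\mu-A_{-1})x=\mu x-y+BC_\Lambda x$, using $R(\mu,A_{-1})B=\mathbb{D}_\mu$, to obtain $x=R(\mu,A)(\mu x-y)+\mathbb{D}_\mu C_\Lambda x$; the first term lies in $D(A)\subset Z$ and the second in $\ker(\mu-A_m)\subset Z$, so $x\in Z$, and then $Gx=C_\Lambda x=Kx$ (using $GR(\mu,A)=0$, $G\mathbb{D}_\mu=I_U$, and \eqref{relation-K-C-Lambda}), i.e. $x\in D(\calA)$. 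Hence $\calA=A^{cl}$.

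Assertion \textbf{(ii)} is now immediate: assumption \textbf{(H3)} allows us to apply Theorem \ref{staff_wei} to $A^{cl}=\calA$, which supplies the $C_0$-semigroup $(\T^{cl}(t))_{t\ge0}$, the inclusion $\mathrm{Range}(\T^{cl}(t))\subset D(C_\Lambda)$, the estimate \eqref{T-cl-estim}, the formula \eqref{WS-VCF}, and the regularity of $(A^{cl},B,C_\Lambda)$ — the last of which will be used in \textbf{(iii)}. For \textbf{(iii)}--\textbf{(iv)}, fix $\lambda\in\rho(A)$; by \eqref{transfer-function-ABC}, $K\mathbb{D}_\lambda=H(\lambda)\in\calL(U)$. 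The equivalence $1\in\rho(K\mathbb{D}_\lambda)\Leftrightarrow 1\in\rho(\mathbb{D}_\lambda K)$ is the general fact that $ST$ and $TS$ have the same nonzero spectrum, made explicit through $(I-\mathbb{D}_\lambda K)^{-1}=I+\mathbb{D}_\lambda(I-K\mathbb{D}_\lambda)^{-1}K$ (used only after $R(\lambda,A)$, hence on $D(A)\subset Z$). Assuming $1\in\rho(K\mathbb{D}_\lambda)$, I would set $R_\lambda:=\bigl(I+\mathbb{D}_\lambda(I-K\mathbb{D}_\lambda)^{-1}K\bigr)R(\lambda,A)\in\calL(X)$ — bounded since $KR(\lambda,A)=CR(\lambda,A)\in\calL(X,U)$ — and check that $R_\lambda X\subset D(\calA)$ (writing $R_\lambda y=R(\lambda,A)y+\mathbb{D}_\lambda v$ with $v=(I-H(\lambda))^{-1}CR(\lambda,A)y$, one finds $G(R_\lambda y)=v=K(R_\lambda y)$), that $(\lambda-\calA)R_\lambda=I$ (from $(\lambda-A_m)\mathbb{D}_\lambda=0$), and that $R_\lambda(\lambda-\calA)=I$ on $D(\calA)$ (from $x=R(\lambda,A)(\lambda-\calA)x+\mathbb{D}_\lambda Gx$ together with $Gx=Kx$); this yields $\lambda\in\rho(\calA)$ and the resolvent formula of \textbf{(iv)}.

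For the converse implication in \textbf{(iii)}, suppose $\lambda\in\rho(\calA)$. Injectivity of $I-H(\lambda)$ is easy: if $(I-H(\lambda))w=0$ then $\mathbb{D}_\lambda w\in D(\calA)$ (because $G\mathbb{D}_\lambda w=w=H(\lambda)w=K\mathbb{D}_\lambda w$) and $(\lambda-\calA)\mathbb{D}_\lambda w=0$, so $\mathbb{D}_\lambda w=0$ and $w=0$. For surjectivity I would use that $(\calA,B,C_\Lambda)$ is a regular linear system, so $B$ is an admissible control operator for $\calA$: given $u\in U$, set $\eta:=R(\lambda,\calA_{-1})Bu\in X\cap D(C_\Lambda)$; from $(\lambda-\calA_{-1})\eta=Bu$ and $\calA_{-1}=A_{-1}+BC_\Lambda$ one gets $(\lambda-A_{-1})\eta=B(u+C_\Lambda\eta)$, hence $\eta=\mathbb{D}_\lambda(u+C_\Lambda\eta)\in\ker(\lambda-A_m)\subset Z$; by \eqref{relation-K-C-Lambda} then $C_\Lambda\eta=K\eta=K\mathbb{D}_\lambda(u+C_\Lambda\eta)=H(\lambda)(u+C_\Lambda\eta)$, so $w:=u+C_\Lambda\eta$ satisfies $(I-H(\lambda))w=u$. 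Thus $I-K\mathbb{D}_\lambda$ is boundedly invertible, i.e. $1\in\rho(K\mathbb{D}_\lambda)$, which together with the previous paragraph completes \textbf{(iii)}--\textbf{(iv)}.

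The step I expect to be the main obstacle is precisely this surjectivity in the converse of \textbf{(iii)}: from the trace decomposition alone one only learns that $I-H(\lambda)$ is injective with range containing $CR(\lambda,A)X$, which is in general a proper subspace of $U$, so $\lambda\in\rho(\calA)$ does not visibly force $1\in\rho(K\mathbb{D}_\lambda)$; what rescues the argument is the regularity of the closed-loop triple $(\calA,B,C_\Lambda)$ from Theorem \ref{staff_wei} (supplying the admissible control operator $B$ for $\calA$, the identity $\calA_{-1}=A_{-1}+BC_\Lambda$, and $R(\lambda,\calA_{-1})Bu\in D(C_\Lambda)$). Turning these into precise statements needs the customary care with the extrapolation space of $\calA$ in the Staffans--Weiss framework, but is otherwise routine; everything else reduces to bookkeeping with the Dirichlet operator and \eqref{relation-K-C-Lambda}.
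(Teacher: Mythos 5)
The paper itself does not prove Theorem \ref{theorem:Hadd_Manzo_Rhandi}; it cites \cite{HaddManzoRhandi}, so your argument can only be measured against that source's standard approach. Your treatment of (i) (via the splitting $x=(x-\mathbb{D}_\mu Gx)+\mathbb{D}_\mu Gx$, the identity $A_mx=A_{-1}x+BGx$ on $Z$, and \eqref{relation-K-C-Lambda}), of (ii), and of the implication $1\in\rho(K\mathbb{D}_\lambda)\Rightarrow\lambda\in\rho(\calA)$ together with the resolvent formula (iv) is correct and is essentially the argument of the cited reference; the computations $(\lambda-\calA)R_\lambda=I$ and $R_\lambda(\lambda-\calA)=I$ check out.

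The genuine soft spot is exactly where you locate it: the surjectivity of $I-K\mathbb{D}_\lambda$ in the converse of (iii). Your derivation of $(\lambda-A_{-1})\eta=B(u+C_\Lambda\eta)$ from ``$\calA_{-1}=A_{-1}+BC_\Lambda$'' equates an element of the extrapolation space of $\calA$ with an element of the extrapolation space of $A$; Theorem \ref{staff_wei} only gives $\calA x=A_{-1}x+BC_\Lambda x$ for $x\in D(\calA)$ (both sides in $X$), not an identity on all of $D(C_\Lambda)$ in a common superspace. Worse, after applying $R(\lambda,A_{-1})$ your claim becomes $\eta=\mathbb{D}_\lambda(u+C_\Lambda\eta)$, i.e.\ essentially $R(\lambda,\calA_{-1})B=\mathbb{D}_\lambda(I-H(\lambda))^{-1}$; this closed-loop formula is available from the feedback theory only on a right half-plane (or by analytic continuation on the connected component of $\rho(A)\cap\rho(\calA)$ containing one), and for a general $\lambda\in\rho(A)\cap\rho(\calA)$ it presupposes the invertibility of $I-H(\lambda)$ --- the very statement being proved. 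So as written the step is circular outside that component.

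The repair is elementary and avoids $\calA_{-1}$ entirely. By \eqref{limit-trans-func} choose $\mu$ with $\mathrm{Re}\,\mu$ large so that $\Vert K\mathbb{D}_\mu\Vert<1$, and given $u\in U$ set $g:=\mathbb{D}_\mu(I-K\mathbb{D}_\mu)^{-1}u\in\ker(\mu-A_m)$ and $x:=g-(\lambda-\mu)R(\lambda,\calA)g$. Then $(\lambda-A_m)x=(\lambda-\mu)g-(\lambda-\calA)R(\lambda,\calA)(\lambda-\mu)g=0$, so $x=\mathbb{D}_\lambda Gx$, while $Gx-Kx=(Gg-Kg)-(\lambda-\mu)(G-K)R(\lambda,\calA)g=u-0=u$ because $R(\lambda,\calA)g\in D(\calA)$. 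Hence $v:=Gx$ solves $(I-K\mathbb{D}_\lambda)v=u$, and together with your (correct) injectivity argument and the bounded inverse theorem this gives $1\in\rho(K\mathbb{D}_\lambda)$. With this substitution your proof is complete; everything else is bookkeeping you have already done correctly.
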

	Under the assumptions of Theorem \ref{theorem:Hadd_Manzo_Rhandi}, the mild solution of the problem \eqref{calA-only} is given by
	\begin{equation}\label{formula-used}
	z(t)=\T^{cl}(t)x+\int^t_0 \T^{cl}(t-s)f(s)ds,
	\end{equation}
	for any $t\ge 0,\;x\in X$ and $f\in L^p(\R^+,X)$. \\\\Before giving  another useful expression of $z$ in term of the intial semigroup $\T$, we need the following very useful result proved in \cite[prop.3.3]{Hadd}.
	\begin{lemma}\label{Hadd_lemma}
		Let $(\Ss(t))_{t\ge 0}$ be a strongly continuous semigroup on $X$ with generator $(\mathbb{G},D(\mathbb{G}))$. Let $\Upsilon\in\calL(D(\mathbb{G}),X)$ be a $p$-admissible observation operator for $\mathbb{G}$. Denote by $\Upsilon_\Lambda$ the Yosida extension of $\Upsilon$ with respect to $\mathbb{G}$. Then
		\begin{align*}
		&(\Ss\ast f)(t):=\int^t_0 \Ss(t-s)f(s)ds\in D(\Upsilon_\Lambda),\quad \text{a.e.}\;t\ge 0,\cr
		&\left\|\Upsilon_\Lambda (\Ss\ast f)\right\|_{L^p([0,\alpha],X)} \leq c(\alpha)\|f \|_{L^p([0,\alpha],X)},
		\end{align*}
		for $\alpha >0$, $f\in L_{loc}^p([0,\infty),X)$ and a constant $c(\alpha)$ independent of $f$ such that $c(\alpha)\rightarrow 0$ as $\alpha\rightarrow 0$.
	\end{lemma}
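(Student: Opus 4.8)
The plan is to prove both assertions first for $f$ ranging over the dense subspace $C_c((0,\infty),D(\mathbb{G}))$ of $L^p_{loc}([0,\infty),X)$, where the convolution is already a classical solution and one can dispense with the Yosida extension, and then to pass to arbitrary $f$ by density. So the first step is to fix $f\in C_c((0,\infty),D(\mathbb{G}))$. Then $\mathbb{G}f$ is continuous, hence $z:=\Ss\ast f$ is a classical solution of $\dot z=\mathbb{G}z+f,\ z(0)=0$; in particular $z(t)\in D(\mathbb{G})\subset D(\Upsilon_\Lambda)$ for every $t\ge 0$ and $\Upsilon_\Lambda z(t)=\Upsilon z(t)$. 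Since $\sigma\mapsto\Ss(t-\sigma)f(\sigma)$ is continuous with values in $D(\mathbb{G})$ (graph norm) and $\Upsilon\in\calL(D(\mathbb{G}),X)$, the operator $\Upsilon$ may be taken under the Bochner integral; moreover $f(\sigma)\in D(\mathbb{G})$ gives $\Upsilon\Ss(\cdot)f(\sigma)=\Psi_\infty f(\sigma)$, where $\Psi_\infty$ is the bounded extended observation map attached to the $p$-admissible operator $\Upsilon$. Hence
\[
\Upsilon_\Lambda(\Ss\ast f)(t)=\int_0^t\Upsilon\Ss(t-\sigma)f(\sigma)\,d\sigma=\int_0^t\bigl(\Psi_\infty f(\sigma)\bigr)(t-\sigma)\,d\sigma=:g(t),\qquad t\ge 0.
\]

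The quantitative core is the $L^p$-bound for $g$, which I would obtain from Minkowski's integral inequality followed by H\"older's inequality: on $[0,\alpha]$,
\[
\|g\|_{L^p([0,\alpha],X)}\le\int_0^\alpha\bigl\|\Psi_\infty f(\sigma)\bigr\|_{L^p([0,\alpha-\sigma],X)}\,d\sigma\le\kappa_\al\int_0^\alpha\|f(\sigma)\|\,d\sigma\le\kappa_\al\,\alpha^{1/q}\|f\|_{L^p([0,\alpha],X)},
\]
where $\kappa_\al$ is a bound for $\|\Psi_\infty\|_{\calL(X,L^p([0,\beta],X))}$ valid for all $0<\beta\le\alpha$ (finite and nondecreasing in $\alpha$, by $p$-admissibility of $\Upsilon$ and the semigroup property). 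Thus $c(\alpha):=\kappa_\al\,\alpha^{1/q}$ does the job and, since $\kappa_\al$ stays bounded as $\alpha\downarrow 0$, $c(\alpha)\to 0$ as $\alpha\to 0$.

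To remove the restriction on $f$, take $f\in L^p_{loc}([0,\infty),X)$ and $\alpha>0$ and pick $f_n\in C_c((0,\infty),D(\mathbb{G}))$ with $f_n\to f$ in $L^p([0,\alpha],X)$. Applying the estimate to $f_n-f_m$ makes $\bigl(\Upsilon_\Lambda(\Ss\ast f_n)\bigr)_n$ Cauchy in $L^p([0,\alpha],X)$ with some limit $g$ satisfying $\|g\|_{L^p([0,\alpha],X)}\le c(\alpha)\|f\|_{L^p([0,\alpha],X)}$, while $(\Ss\ast f_n)(t)\to(\Ss\ast f)(t)$ uniformly on $[0,\alpha]$. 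Passing to a subsequence along which $\Upsilon_\Lambda(\Ss\ast f_n)(t)\to g(t)$ for a.e.\ $t$ and using that the Yosida extension $\Upsilon_\Lambda$ is a closed operator, one concludes $(\Ss\ast f)(t)\in D(\Upsilon_\Lambda)$ and $\Upsilon_\Lambda(\Ss\ast f)(t)=g(t)$ for a.e.\ $t\ge 0$, which is exactly the assertion, with the displayed bound and $c(\alpha)\to 0$.

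The Minkowski/H\"older computation is routine; the points that will need care are operator-theoretic. The first is the bookkeeping in the dense case — justifying that the unbounded $\Upsilon$ may be moved inside the Bochner integral and identifying the integrand with the $L^p$-function $\Psi_\infty f(\sigma)$; this is exactly where $p$-admissibility of $\Upsilon$ is used. The second, and in my view the real obstacle, is the last limiting step, where $L^p$-convergence must be upgraded to the pointwise a.e.\ membership $(\Ss\ast f)(t)\in D(\Upsilon_\Lambda)$, which relies on closedness of $\Upsilon_\Lambda$. If one wished to sidestep that point, an alternative is to work directly with the defining limit: commuting $R(s,\mathbb{G})$ into the convolution gives, for every $f\in L^p_{loc}$,
\[
s\Upsilon R(s,\mathbb{G})(\Ss\ast f)(t)=\int_0^t\bigl(\Psi_\infty[\,sR(s,\mathbb{G})f(\sigma)\,]\bigr)(t-\sigma)\,d\sigma ,
\]
and letting $s\to\infty$, using $sR(s,\mathbb{G})f(\sigma)\to f(\sigma)$ together with the same Minkowski bound, recovers $g$; but the passage from $L^p$- to a.e.-convergence reappears there as well.
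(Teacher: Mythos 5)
The paper does not actually prove this lemma; it imports it verbatim from \cite{Hadd} (Prop.~3.3 there), so your argument has to be judged against that proof. Your quantitative core is fine and is essentially the standard computation: for $f\in C_c((0,\infty),D(\mathbb{G}))$ the convolution is a classical solution, $\Upsilon$ passes under the Bochner integral because the integrand is continuous in the graph norm, and the Minkowski--H\"older chain correctly yields $c(\alpha)=\kappa_\alpha\,\alpha^{1/q}$. (Minor caveat: this tends to $0$ only for $p>1$; for $p=1$ one gets $c(\alpha)=\kappa_\alpha$, which need not vanish, and indeed the ``$c(\alpha)\to0$'' clause is already delicate for $p=1$.)

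The genuine gap is exactly the step you flag as ``the real obstacle'': your conclusion rests on the assertion that the Yosida extension $\Upsilon_\Lambda$ is a closed operator, and this is false in general. $D(\Upsilon_\Lambda)$ is the set where the strong limit of the bounded operators $s\Upsilon R(s,\mathbb{G})$ exists, and such sets of strong convergence are not stable under norm limits even when the images converge. Concretely, take the left-shift semigroup on $L^p(0,\infty)$ with $\Upsilon x=x(0)$ (which is $p$-admissible); then $s\Upsilon R(s,\mathbb{G})x=s\int_0^\infty e^{-st}x(t)\,dt$ is the Abel mean of $x$ at $0^+$. For $x(t)=\sin(\log(1/t))\mathbf{1}_{(0,1)}(t)$ this quantity equals ${\rm Im}\,(s^{i}\Gamma(1-i))+o(1)$ and oscillates, so $x\notin D(\Upsilon_\Lambda)$, while $x_n:=x\mathbf{1}_{(1/n,1)}$ satisfies $x_n\in D(\Upsilon_\Lambda)$, $\Upsilon_\Lambda x_n=0$ and $x_n\to x$ in $L^p$. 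Hence from $z_n(t)\to z(t)$ and $\Upsilon_\Lambda z_n(t)\to g(t)$ a.e.\ you cannot infer $z(t)\in D(\Upsilon_\Lambda)$. Your proposed alternative (letting $s\to\infty$ in $s\Upsilon R(s,\mathbb{G})(\Ss\ast f)(t)$) is the right starting point, but it must be completed not by an $L^p$-limit but by a pointwise one: writing $sR(s,\mathbb{G})=s\int_0^\infty e^{-su}\Ss(u)\,du$ turns $s\Upsilon R(s,\mathbb{G})(\Ss\ast f)(t)$ into an Abel mean at $t$ of the candidate function $g(t)=\int_0^t(\Psi_\infty f(\sigma))(t-\sigma)\,d\sigma$ plus a boundary term controlled by the same Minkowski estimate, and the Abel mean converges at every Lebesgue point of $g$, i.e.\ a.e. This Lebesgue-point argument --- the same device used to prove $\Psi_\infty x=\Upsilon_\Lambda\Ss(\cdot)x$ a.e.\ --- is the missing ingredient; closedness of $\Upsilon_\Lambda$ cannot be used as a substitute.
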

	\begin{proposition}\label{VCF-calA-f}
		Let assumptions of Theorem \ref{theorem:Hadd_Manzo_Rhandi} be satisfied. Let $\al>0$ and $f\in L^p(\R^+,X)$.  The non-homogenous Cauchy problem \eqref{calA-only} is well-posed and its mild solution satisfies for any initial condition $x\in X,$
		\begin{align}\label{3-calA-f-z}
		\begin{split}
		&z(t)\in D(C_\Lambda)\quad\text{a.e.}\; t>0,\cr & \|C_\Lambda z(\cdot)\|_{L^p([0,\al],X)}\le c(\al)  \left(\|x\|+\|f\|_{L^p([0,\al],X)}\right),\cr & z(t)=\T(t)x+\int^t_0 \T_{-1}(t-s)BC_\Lambda z(s)ds+\int^t_0 \T(t-s)f(s)ds,\qquad t\ge 0,
		\end{split}
		\end{align}
		where $c(\al)>0$ is a constant independent of $f$.
	\end{proposition}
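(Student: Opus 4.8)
The plan is to combine the three ingredients already at hand: the closed‑loop identification $\calA=A^{cl}=A_{-1}+BC_\Lambda$ coming from Theorem \ref{theorem:Hadd_Manzo_Rhandi}, the admissibility‑and‑estimate package of Theorem \ref{staff_wei}, and the convolution Lemma \ref{Hadd_lemma}. By Theorem \ref{theorem:Hadd_Manzo_Rhandi}, $\calA$ generates the $C_0$‑semigroup $(\T^{cl}(t))_{t\ge 0}$ enjoying all the conclusions of Theorem \ref{staff_wei}; in particular $(A^{cl},B,C_\Lambda)$ generates a regular linear system, so $C_\Lambda$ (restricted to $D(A^{cl})$) is a $p$‑admissible observation operator for $A^{cl}$ and the estimate \eqref{T-cl-estim} holds. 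Since $\calA$ is a generator, \eqref{calA-only} is well‑posed in the usual sense and, by \eqref{formula-used}, its mild solution is $z(t)=\T^{cl}(t)x+(\T^{cl}\ast f)(t)$. It then remains to establish the three displayed assertions.

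For the first two lines of \eqref{3-calA-f-z}, I treat the two terms of $z(t)=\T^{cl}(t)x+(\T^{cl}\ast f)(t)$ separately. For $\T^{cl}(\cdot)x$, the estimate \eqref{T-cl-estim} already gives $\T^{cl}(t)x\in D(C_\Lambda)$ for a.e.\ $t>0$ and $\|C_\Lambda\T^{cl}(\cdot)x\|_{L^p([0,\al],U)}\le c_\al\|x\|$. For $(\T^{cl}\ast f)(\cdot)$, I would apply Lemma \ref{Hadd_lemma} with $\Ss=\T^{cl}$, $\mathbb{G}=A^{cl}$ and $\Upsilon=C_\Lambda|_{D(A^{cl})}$ (the observation space being $U$), using the standard fact of closed‑loop regular‑systems theory that the Yosida extension of $C_\Lambda|_{D(A^{cl})}$ with respect to $A^{cl}$ is again $C_\Lambda$ (cf.\ \cite{HaddManzoRhandi}). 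This yields $(\T^{cl}\ast f)(t)\in D(C_\Lambda)$ for a.e.\ $t$, the bound $\|C_\Lambda(\T^{cl}\ast f)\|_{L^p([0,\al],U)}\le c(\al)\|f\|_{L^p([0,\al],X)}$ with $c(\al)\to 0$ as $\al\to 0$, and -- as part of the proof of that lemma -- the pointwise identity $C_\Lambda(\T^{cl}\ast f)(t)=\int_0^t C_\Lambda\T^{cl}(t-s)f(s)\,ds$ for a.e.\ $t$. Adding the two terms and using the triangle inequality gives $z(t)\in D(C_\Lambda)$ a.e.\ together with the stated estimate, for a suitable constant $c(\al)$ depending only on $\al$.

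For the third line of \eqref{3-calA-f-z}, I would apply the variation‑of‑constants formula \eqref{WS-VCF} to $x$ and, for each fixed $r\in(0,t)$, to the vector $f(r)$, and then integrate the latter identity over $r\in(0,t)$. In the resulting double integral $\int_0^t\!\int_0^{t-r}\T_{-1}(t-r-\sigma)BC_\Lambda\T^{cl}(\sigma)f(r)\,d\sigma\,dr$ I would perform the change of variable $s=r+\sigma$ (so that $t-r-\sigma=t-s$) and apply Fubini over the triangle $\{0\le r\le s\le t\}$, turning it into $\int_0^t\T_{-1}(t-s)\bigl(\int_0^s BC_\Lambda\T^{cl}(s-r)f(r)\,dr\bigr)ds$. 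Pulling the bounded operator $B$ out of the inner integral and invoking the identity $\int_0^s C_\Lambda\T^{cl}(s-r)f(r)\,dr=C_\Lambda(\T^{cl}\ast f)(s)$ from the previous step, the double integral becomes $\int_0^t\T_{-1}(t-s)BC_\Lambda(\T^{cl}\ast f)(s)\,ds$. Adding the expansion of $\T^{cl}(t)x$, collecting the two $\T_{-1}$‑integrals into $\int_0^t\T_{-1}(t-s)BC_\Lambda z(s)\,ds$ and rewriting $\int_0^t\T(t-r)f(r)\,dr=\int_0^t\T(t-s)f(s)\,ds$, one arrives precisely at the third line of \eqref{3-calA-f-z}. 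If one prefers not to justify Fubini directly in $X_{-1}$, the same computation is completely elementary when $f$ ranges over a dense subspace (e.g.\ $C^1$‑functions with values in $D(A^{cl})$), and the identity then extends to all $f\in L^p(\R^+,X)$ by density: for $t\le\al$ the maps $f\mapsto z(t)$, $f\mapsto\int_0^t\T(t-s)f(s)\,ds$ and $f\mapsto\int_0^t\T_{-1}(t-s)BC_\Lambda z(s)\,ds=\Phi_t\bigl((C_\Lambda z)|_{[0,t]}\bigr)$ are all continuous on $L^p([0,\al],X)$ (the last because $B$ is $p$‑admissible for $A$ and $\|C_\Lambda z\|_{L^p([0,\al],U)}\le c(\al)(\|x\|+\|f\|_{L^p([0,\al],X)})$ by the previous step).

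The step I expect to be the main obstacle is precisely this last one: interchanging the order of integration and letting the unbounded operator $C_\Lambda$ pass through an inner Bochner integral valued in $X_{-1}$. Both become transparent on the smooth dense class -- where the ranges of $\T^{cl}$ lie in $D(A^{cl})$ and $C_\Lambda$ acts there as the ordinary observation operator -- so that the a priori estimates obtained in the second step are exactly what is needed to extend the identity by density. The two facts one should be content to borrow from the regular‑linear‑systems theory are the coincidence, on $D(C_\Lambda)$, of the Yosida extension of $C_\Lambda|_{D(A^{cl})}$ with respect to $A^{cl}$ with $C_\Lambda$ itself, and the admissibility identity $C_\Lambda(\T^{cl}\ast f)(\cdot)=\int_0^{\cdot}C_\Lambda\T^{cl}(\cdot-s)f(s)\,ds$ contained in (the proof of) Lemma \ref{Hadd_lemma}.
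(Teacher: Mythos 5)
Your proposal is correct and follows essentially the same route as the paper: identify $\calA$ with the closed-loop generator, deduce admissibility of $C_\Lambda$ for $\calA$ and invoke Lemma \ref{Hadd_lemma} together with the fact that the Yosida extension of $C_\Lambda$ with respect to $\calA$ agrees with $C_\Lambda$, then derive the variation-of-constants identity on a dense class of $D(\calA)$-valued inhomogeneities via Fubini and a commutation of $C_\Lambda$ with the convolution, passing to the limit using the admissibility of $B$ and the $L^p$-bound on $C_\Lambda z$. The only notable difference is that you borrow the Yosida-extension coincidence from the literature, whereas the paper proves the needed inclusion $D(C_{\Lambda,\calA})\subset D(C_\Lambda)$ directly from the identity $sC_\Lambda R(s,\calA)x=sCR(s,A)x+H(s)\,sC_\Lambda R(s,\calA)x$ and $\lim_{s\to+\infty}H(s)=0$, and it justifies the commutation of $C_\Lambda$ with the inner integral explicitly by writing $C_\Lambda(-\calA)^{-1}(-\calA)\int_0^\tau\T^{cl}(\tau-s)f_n(s)\,ds$ for $f_n$ continuous with values in $D(\calA)$.
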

	\begin{proof}
		By virtue of Theorem  \ref{theorem:Hadd_Manzo_Rhandi}, $\T^{cl}$ the semigroup generated by $\calA$ and let $z:[0,+\infty)\to X$ be the mild solution of the problem \eqref{calA-only} given by \eqref{formula-used}. According to Theorem \ref{staff_wei}, we know that $C_\Lambda$ is an admissible observation operator for $\calA$. We denote by $C_{\Lambda,\calA}$ the Yosida extension of $C_\Lambda$ with respect to $\calA$. Then $D(C_{\Lambda,\calA}) \subset D(C_\Lambda)$ and $C_{\Lambda,\calA}=C_\Lambda$ on $D(C_{\Lambda,\calA})$. In fact, let $x\in D(C_{\Lambda,\calA})$ and $s>0$ sufficiently large. Then by first taking Laplace transform on both sides of \eqref{WS-VCF} and second applying $sC_\Lambda,$ we obtain
		\begin{equation}\label{CR-formula}
		sC_\Lambda R(s,\calA)x=sCR(s,A)+H(s)C_\Lambda s R(s,\calA)x,
		\end{equation}
		where we have used \eqref{transfer-function-ABC}. Remark that
		\begin{equation*}
		\|H(s)C_\Lambda s R(s,\calA)x\|\le \|H(s)\|\left(\|C_\Lambda s R(s,\calA)x-C_{\Lambda,\calA}x\|+ \|C_{\Lambda,\calA}x\|\right).
		\end{equation*}
		Hence, by \eqref{limit-trans-func} and the fact that $x\in D(C_{\Lambda,\calA})$, we obtain
		\begin{equation*}
		\lim_{s\to+\infty}H(s)C_\Lambda s R(s,\calA)x=0.
		\end{equation*}
		Now from \eqref{CR-formula}, we deduce that $x\in D(C_\Lambda)$ and $C_{\Lambda,\calA}x=C_\Lambda x$. Let $x\in X,\;\al>0$ and $f\in L^p([0,\al],X)$. The fact that $C_\Lambda$ is $p$--admissible for $\calA,$ then by using \eqref{formula-used} and Lemma \ref{Hadd_lemma}, we obtain $z(t)\in D(C_{\Lambda,\calA})$ for  a.e. $t>0$.  This shows that $z(t)\in D(C_\Lambda)$ and $C_\Lambda z(t)=C_{\Lambda,\calA}z(t)$ for a.e. $t>0$. The estimation in \eqref{3-calA-f-z} follows immediately from \eqref{T-cl-estim} and Lemma \ref{Hadd_lemma}. Let us prove the last property in \eqref{3-calA-f-z}. By density there exists $(f_n)_n\subset \calC([0,\al],D(\calA))$ such that $f_n\to f$ in $L^p([0,\al],X)$ as $n\to \infty$. We put
		\begin{equation}\label{expr-z-n}
		z_n(t)=\T^{cl}(t)x+\int^t_0 \T^{cl}(t-s)f_n(s)ds,\qquad t\ge 0.
		\end{equation}
		Using H\"{o}lder inequality, it is clear that $\|z_n(t)-z(t)\|\to 0$ as $n\to\infty$. Now let us prove that $z_n$ satisfies the third assertion in \eqref{3-calA-f-z}. In fact, the estimation in \eqref{3-calA-f-z} implies that
		\begin{equation*}
		\|C_\Lambda z_n(\cdot)-C_\Lambda z(\cdot)\|_{L^p([0,\al],X)}\le c_\al \|f_n-f\|_{L^p([0,\al],X)}\underset{n\to\infty}{\longrightarrow}0.
		\end{equation*}
		On the other hand, using the expression of the semigroup $\T^{cl}$ given in \eqref{WS-VCF}, change of variable and Fubini theorem we obtain
		\begin{align}\label{z_n}
		z_n(t)&=\T^{cl}(t)x+\int^t_0\T(t-s)f_n(s)ds+\int^t_0 \T_{-1}(t-\t)B\int^{\t}_0 C_\Lambda \T^{cl}(\t-s)f_n(s)ds d\t \cr &=\T(t)x+ +\int^t_0\T(t-s)f_n(s)ds\cr & \hspace{1.5cm}+\int^t_0 \T_{-1}(t-\t)B\left(C_\Lambda \T^{cl}(\t)x+\int^{\t}_0 C_\Lambda \T^{cl}(\t-s)f_n(s) ds\right) d\t.
\end{align}
		For simplicity we assume that $0\in\rho(\calA)$. Then we have
		\begin{align*}
		C_\Lambda \int^{\t}_0 \T^{cl}(\t-s)f_n(s)ds&=C_\Lambda (-\calA)^{-1} (-\calA)\int^{\t}_0 \T^{cl}(\t-s)f_n(s)ds \cr &=C_\Lambda (-\calA)^{-1} \int^{\t}_0 \T^{cl}(\t-s)(-\calA)f_n(s)ds \cr &=\int^{\t}_0 C_\Lambda (-\calA)^{-1} \T^{cl}(\t-s)(-\calA)f_n(s)ds\cr &=\int^{\t}_0 C_\Lambda \T^{cl}(\t-s)f_n(s)ds.
		\end{align*}
		Now replacing this in \eqref{z_n}, and using \eqref{expr-z-n}, we obtain
		\begin{equation*}
		z_n(t)=\T(t)x+\int^t_0 \T_{-1}(t-s)BC_\Lambda z(s)ds+\int^t_0 \T(t-s)f_n(s)ds,\qquad t\ge 0.
		\end{equation*}
		Put
		\begin{equation*}
		\varphi(t)=\T(t)x+\int^t_0 \T_{-1}(t-s)BC_\Lambda z(s)ds+\int^t_0 \T(t-s)f(s)ds,\qquad t\ge 0.
		\end{equation*}
		Then for any $t\in [0,\al],$ we have
		\begin{equation*}
		\|z_n(t)-\varphi(t)\|\le \ga_\al\left( \|C_\Lambda z_n(\cdot)-C_\Lambda z(\cdot)\|_{L^p([0,\al],X)}+ \|f_n-f\|_{L^p([0,\al],X)}\right),
		\end{equation*}
		due to the admissibility of $B$ for $A$ and  H\"{o}lder inequality. This shows that $\|z_n(t)-\varphi(t)\|\to 0$ as $n\to \infty$, and hence $z=\varphi$.
	\end{proof}
	Now we can state the main result of this section.
	\begin{theorem}\label{theorem:generalisation_of_Hadd_Manzo_Rhandi}
		Let assumptions of Theorem \ref{theorem:Hadd_Manzo_Rhandi} be satisfied. In addition, let $P:Z\to X$ such that  $(A,B,\P)$  generates a regular linear system on $X,U,X$, where $\P=Pi$. The following assertions hold:
		\begin{itemize}
			\item [{\rm (i)}] The operator $P\in\calL(D(\calA),X)$ is a $p$-admissible observation operator for $\calA$, hence the operator $(\calA+P,D(\calA))$ generates a strongly continuous semigroup on $X$.
			\item [{\rm (ii)}] The boundary problem \eqref{PM} is well-posed and its mild solution $z:[0,+\infty)\to X$ satisfies:
			\begin{align*}
			& z(t)\in D(\P_\Lambda)\quad\text{a.e.}\; t\ge 0,\cr & z(t)=\T^{cl}(t)x+\int^t_0 \T^{cl}(t-s)\left(\P_\Lambda z(s)+f(s)\right)ds,
			\end{align*}
			for any $t\ge 0,$ initial condition $x\in X$ and $f\in L^p([0,\infty),X),$ where $\P_\Lambda$ is the Yosida extension of $P$ w.r.t $\calA$.
		\end{itemize}
	\end{theorem}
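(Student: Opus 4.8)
The plan is to regard the extra term $P$ as a Miyadera--Voigt perturbation of the operator $\calA$, with the bounded control operator $I_X\in\calL(X,X)$ and with $P$ as observation operator, and then to quote Remark \ref{Hadd_thm}\,(i) (the Miyadera--Voigt instance of Theorem \ref{staff_wei}) and Lemma \ref{Hadd_lemma} under the substitutions $A\rightsquigarrow\calA$, $\T\rightsquigarrow\T^{cl}$, $B\rightsquigarrow I_X$, $C\rightsquigarrow P$. Throughout, for a generator $\mathbb{G}$ write $S^{\mathbb{G}}_\Lambda$ for the Yosida extension of $S$ with respect to $\mathbb{G}$; thus $\P_\Lambda=P^{\calA}_\Lambda$ and the $C_\Lambda$ of the paper is $C^{A}_\Lambda$.

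\emph{Proof of (i).} First I would extract two facts from the hypothesis that $(A,B,\P)$ is a \emph{regular} linear system on $X,U,X$ (with $\P=Pi$): arguing as in \cite[Lemma~3.6]{HaddManzoRhandi} but with output space $X$ in place of $U$, one gets $Z\subset D(P^{A}_\Lambda)$ with $(P^{A}_\Lambda)_{|Z}=P$; and, writing $\Psi^{\P}_\infty,\F^{\P}_\infty$ for the output and extended input--output maps of $(A,B,\P)$, regularity gives $P^{A}_\Lambda\Phi_t v=(\F^{\P}_\infty v)(t)$ for a.e.\ $t$ and all $v\in L^p_{loc}([0,\infty),U)$ (this is \eqref{representation-y} with zero initial state), while $p$-admissibility of $\P$ gives $P^{A}_\Lambda\T(t)x=(\Psi^{\P}_\infty x)(t)$ for a.e.\ $t$ and all $x\in X$. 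Then I would fix $x\in X$, put $v:=C_\Lambda\T^{cl}(\cdot)x$ (so $\|v\|_{L^p([0,\al],U)}\le c_\al\|x\|$ by \eqref{T-cl-estim}), and read \eqref{WS-VCF} as $\T^{cl}(t)x=\T(t)x+\Phi_t v$. Both summands lie in $D(P^{A}_\Lambda)$ for a.e.\ $t$, hence so does $\T^{cl}(t)x$, and
\begin{equation*}
P^{A}_\Lambda\T^{cl}(t)x=(\Psi^{\P}_\infty x)(t)+(\F^{\P}_\infty v)(t),\qquad\text{a.e. }t\ge 0 ,
\end{equation*}
so $\|P^{A}_\Lambda\T^{cl}(\cdot)x\|_{L^p([0,\al],X)}\le c'_\al\|x\|$ for a suitable constant. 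For $x\in D(\calA)$ one has $\T^{cl}(t)x\in D(\calA)\subset Z$, whence $P^{A}_\Lambda\T^{cl}(t)x=P\T^{cl}(t)x$; this is precisely the admissibility estimate \eqref{walid} for $(\calA,P)$, so $P\in\calL(D(\calA),X)$ is a $p$-admissible observation operator for $\calA$. H\"older's inequality then produces $\al_0>0$ and $\ga\in(0,1)$ with $\int_0^{\al_0}\|P\T^{cl}(t)x\|\,dt\le\ga\|x\|$ for $x\in D(\calA)$, and Remark \ref{Hadd_thm}\,(i) (with the bounded control operator $I_X$) shows that $(\calA+P,D(\calA))$ generates a $C_0$-semigroup, which I denote $\T^{P}$.

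\emph{Proof of (ii).} Since $\calA+P$ generates $\T^{P}$, the reformulation \eqref{calA-max} of \eqref{PM} is well-posed with mild solution $z(t)=\T^{P}(t)x+\int_0^t\T^{P}(t-s)f(s)\,ds$. Applying Remark \ref{Hadd_thm}\,(i) once more to $(\calA,P)$ gives $Range\,\T^{P}(t)\subset D(\P_\Lambda)$ for a.e.\ $t$, an estimate of type \eqref{T-cl-estim}, and the formulas \eqref{MV-VCF1}--\eqref{MV-VCF2} with $\T^{cl},\T^{P},\P_\Lambda$ in the roles of $\T,\T^{cl},BC_\Lambda$. Lemma \ref{Hadd_lemma} with $\mathbb{G}=\calA+P$ and $\Upsilon=P_{|D(\calA)}$ (admissible for $\calA+P$, as the closed-loop system is again regular) shows $\int_0^t\T^{P}(t-s)f(s)\,ds\in D(P^{\calA+P}_\Lambda)$ for a.e.\ $t$ with the associated $L^p$-bound; and, exactly as in the first paragraph of the proof of Proposition \ref{VCF-calA-f}, $P^{\calA+P}_\Lambda$ is a restriction of $\P_\Lambda=P^{\calA}_\Lambda$, so $z(t)\in D(\P_\Lambda)$ a.e. For the representation I would follow the proof of Proposition \ref{VCF-calA-f} verbatim: pick $f_n\in\calC([0,\al],D(\calA+P))$ with $f_n\to f$ in $L^p$, set $z_n(t)=\T^{P}(t)x+\int_0^t\T^{P}(t-s)f_n(s)\,ds$, insert \eqref{MV-VCF1} for $\T^{P}$ in terms of $\T^{cl}$, use Fubini's theorem and a change of variables, and pull $\P_\Lambda$ through the integral by the resolvent identity (assuming w.l.o.g.\ $0\in\rho(\calA+P)$) to get
\begin{equation*}
z_n(t)=\T^{cl}(t)x+\int_0^t\T^{cl}(t-s)\bigl(\P_\Lambda z_n(s)+f_n(s)\bigr)\,ds .
\end{equation*}
Since $z_n\to z$ uniformly on $[0,\al]$ and, by Lemma \ref{Hadd_lemma}, $\|\P_\Lambda z_n-\P_\Lambda z\|_{L^p([0,\al],X)}\le c(\al)\|f_n-f\|_{L^p([0,\al],X)}\to 0$, a passage to the limit (with H\"older's inequality) yields the asserted identity.

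\emph{Expected main obstacle.} The semigroup generation will cost essentially nothing once $P$ is seen to be $p$-admissible for $\calA$; the real work is the bookkeeping of the three Yosida extensions $P^{A}_\Lambda$, $\P_\Lambda=P^{\calA}_\Lambda$, $P^{\calA+P}_\Lambda$ — establishing $Z\subset D(P^{A}_\Lambda)$ with $(P^{A}_\Lambda)_{|Z}=P$, the identity $(\F^{\P}_\infty v)(t)=P^{A}_\Lambda\Phi_t v$, and the fact that $P^{\calA+P}_\Lambda$ restricts to $\P_\Lambda$ — together with the justification that $\P_\Lambda$ commutes with the convolution integrals; all of this is handled by the resolvent-approximation and density arguments already employed in the proof of Proposition \ref{VCF-calA-f}.
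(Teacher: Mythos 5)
Your argument for (i) is exactly the paper's: decompose $\T^{cl}(t)x$ via \eqref{WS-VCF}, handle the free term by the $p$-admissibility of $\P$ for $A$ and the convolution term by the regularity of $(A,B,\P)$ together with \eqref{T-cl-estim}, identify $P$ with the Yosida extension of $\P$ w.r.t.\ $A$ on $Z\supset D(\calA)$, and invoke Remark \ref{Hadd_thm}\,(i). For (ii) the paper simply cites \cite[Thm.~5.1]{Hadd}, whereas you re-derive that statement by the density/Fubini scheme of Proposition \ref{VCF-calA-f}; this is correct and amounts to the same route with the citation unpacked.
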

	\begin{proof}
		(i) We first remark from \eqref{relation-K-C-Lambda} that $Z\subset D(\P_{0,\Lambda})$ and $P=\P_{0,\Lambda}$ on $Z,$ where  $\P_{0,\Lambda}$ denotes the Yosida extension of $\P$ w.r.t. $A$. Let $x\in D(\calA)$ and $\al>0$. The facts that $(A,B,\P)$ is regular and using \eqref{T-cl-estim}, we obtain
		\begin{align}\label{hhhhh}
		&\int^t_0 \T_{-1}(t-s)BC_\Lambda \T^{cl}(s)x\in D(\P_{0,\Lambda})\quad\text{a.e.}\;t\ge 0,\;\text{and}\cr & \left\| \P_{0,\Lambda}\int^{\cdot}_0 \T_{-1}(t-s)BC_\Lambda \T^{cl}(s)x\right\|_{L^p([0,\al],X)} \le \beta(\al) \|x\|,
		\end{align}
		for some constant $\beta(\al)>0$ depending only on $\al$. \\On the other hand, appealing to \eqref{WS-VCF}, we obtain
		\begin{align*}
		P\T^{cl}(t)x&=\P_{0,\Lambda}\T^{cl}(t)x\cr &=\P_{0,\Lambda}\T(t)x+\P_{0,\Lambda}\int^{t}_0 \T_{-1}(t-s)BC_\Lambda \T^{cl}(s)x.
		\end{align*}
		Combining \eqref{hhhhh} and the $p$-admissibility of $\P$ for $A$ yield the $p$-admissibility of $P$ for $\calA$. According to Remark \ref{Hadd_thm} (i), the operator $(\calA+P,D(\calA))$ generates a strongly continuous semigroup on $X$.  The assertion (ii) follows from \cite[thm.5.1]{Hadd}
	\end{proof}
\section{Perturbation Theorems for maximal regularity}\label{sec:4}
\subsection{Maximal regularity}\label{sub:4.0}
Let $\G:D(\G)\subset X\to X$ be the generator of a strongly continuous semigroup $\Ss:=(\Ss(t))_{t\ge 0}$ on a Banach space $X$. Consider the following non--homogeneous abstract Cauchy problem:
	\begin{equation}
	\label{ACP}
	\begin{cases}
	\dot{z}(t) = \G z(t)+f(t) , & \mbox{ } 0<t \leq T \\
	z(0) = 0, & \mbox{}
	\
	\end{cases}
	\end{equation} where $f: [0,T]\rightarrow X$ a measurable function.
	\begin{definition}
		We say that the operator $\G$ (or the problem \eqref{ACP}) has the maximal $L^p$-regularity on the interval $[0,T]$, and we write $\G\in \mathscr{MR}_p(0,T;X)$, if for all $f\in L^p([0,T],X)$, there exists a unique $z \in W^{1,p}([0,T],X)\cap L^p([0,T],D(\G))$ which verifies (\ref{ACP}).
	\end{definition}
	By "maximal" we mean that the applications $f$, $\G z$ and $z$ have the same regularity. Due to  the closed graph theorem,  if  $\G\in \mathscr{MR}_p(0,T;X)$ then
	\begin{equation}
	\label{max_reg_estim}
	\Vert \dot{z}\Vert_{L^p([0,T],X)}+\Vert z\Vert_{L^p([0,T],X)}+\Vert \G z\Vert_{L^p([0,T],X)}\leq C\Vert f\Vert_{L^p([0,T],X)},
	\end{equation}
	for a constant $C>0$ independent of $f$.
	
	It is well-known that a necessary condition for the maximal $L^p$-regularity is that $\G$ generates an analytic semigroup. According to De Simon \cite{Simon} this condition is also sufficient if $X$ is a Hilbert space. On the other hand,  it is shown in \cite{Dore} that if  $\G\in \mathscr{MR}_p(0,T;X)$ for one $p\in [1,\infty]$ then $\G\in \mathscr{MR}_q(0,T;X)$ for all $q\in]1,\infty[$. Moreover if $\G\in \mathscr{MR}_p(0,T;X)$ for one $T>0$, then $\G\in \mathscr{MR}_p(0,T';X)$ for all $T'>0$. Hence we simply write $\G\in \mathscr{MR}(0,T;X)$.
\begin{remark}\label{remark:Maximal_Regularity_Operator}
		\begin{enumerate}
			\item[{\rm(i)}] Let  $\calC([0,T];D(\G))$ be the space of all continuous functions from $[0,T]$ to $D(\G)$, which  is a dense space of $L^p([0,T];X)$. It is well-known ((see \cite{LutzWeis} (2.a) or \cite{KunstmannWeis} 1.5)) that  $\G$ has maximal $L^p$-regularity on $[0,T]$ if and only if $(\Ss(t))_{t\geq 0}$ is analytic and the operator $\mathscr{R}$ defined by
			\begin{equation}
			\label{R_op}
			(\mathscr{R}f)(t):=\G\int_0^t \Ss(t-s)f(s)ds \qquad \qquad f\in \calC([0,T],D(\G)),
			\end{equation}
			extends to a bounded operator on $L^p([0,T];X)$. As we will see in our main results, this characterization is very useful if one works in general Banach spaces.
			\item [{\rm(ii)}] It is known (see \cite{Dore}) that if $\G\in \mathscr{MR}(0,T;X)$ then for every $\lambda\in \mathbb{C}$, $\G+\lambda\in \mathscr{MR}(0,T;X)$, hence without lost of generality, we will assume through this paper that our generators satisfy $\omega_0(\G)<0$.
		\end{enumerate}
	\end{remark}
	In order to recall another characterization of maximal regularity, we need some preliminary definitions.
\begin{definition}
		We say that a Banach space $X$ is a UMD-space if for some (hence all) $p\in (1,\infty)$ , $\mathscr{H}\in \mathcal{L}(L^p(\mathbb{R},X))$ where
$$
(\mathscr{H}f)(t) = \frac{1}{\pi} \lim_{\epsilon\to 0} \int_{\vert s\vert>\epsilon} \frac{f(t-s)}{s}ds,\quad t\in \mathbb{R},\quad  f\in \mathcal{S}(\mathbb{R},X),
		$$
		where $\mathcal{S}(\mathbb{R},X)$ is the Schwartz space.
	\end{definition}
	Classical  UMD-spaces are Hilbert spaces and $L^p$-spaces, where $p\in (1,\infty)$. It is to be noted that every  UMD-space is a reflexive space (see \cite{Amann}).
	\begin{definition}
		A set $\tau \subset \mathcal{L}(X,Y)$ is called $\mathcal{R}$-bounded if there is a constant $C>0$ such that for all $n\in \mathbb{N}$, $T_1,...,T_n \in \tau$, $x_1,...,x_n \in X$,
		\begin{equation*}
		\int_0^1 \Vert \sum_{j=1}^n r_j(s) T_j x_j \Vert_Y ds \leq C \int_0^1 \Vert \sum_{j=1}^n r_j(s) x_j \Vert_X ds,
		\end{equation*}
		where $(r_j)_{j\geq 1}$ is a sequence of independent $\{-1;1\}$-valued random variables on $[0,1]$(e.g. Rademacher variables).
	\end{definition}
\begin{remark}\label{Transfer_Function_R-boundedness}
		Here we give examples of $\mathcal{R}$-bounded sets.  We let $A$ be the generator of a bounded  analytic semigroup on a Banach space $X$, $B:U\to X_{-1}$ and $C:D(A)\subset X\to U$ are linear bounded operators, where $U$ is another (boundary) Banach space. We assume that $(A,B,C)$ generates a regular linear system on $X,U,U$ with transfer function
		\begin{equation*}
		H(\la):=C_\Lambda R(\la,A_{-1})B\in\calL(U),\qquad \la\in\rho(A),
		\end{equation*}
		where $C_\Lambda$ is the Yosida extension of $C$ with respect to $A$, see Section \ref{sec:2}. It has been observed in \cite[p.513]{HaaKun} that the set $\{H(is):s\neq 0\}$ is $\mathcal{R}$-bounded.
	\end{remark}
	The following remarkable result is due to Weis \cite{LutzWeis2}
	\begin{theorem}\label{theorem:Weis_Characterization}
		Let $\G$ be the generator of a bounded analytic semigroup in a UMD-space $X$. Then $\G$ has maximal $L^p$-regularity for some (hence all) $p\in (1,\infty)$ if and only if the set $\{sR(is,\G);s\neq 0 \}$ is $\mathcal{R}$-bounded.
	\end{theorem}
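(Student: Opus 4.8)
The plan is to reduce the assertion, via the characterization of maximal $L^p$--regularity recalled in Remark~\ref{remark:Maximal_Regularity_Operator}(i), to an operator--valued Fourier multiplier problem on the line, and then to invoke the Mikhlin--type multiplier theorem that holds in UMD spaces. By Remark~\ref{remark:Maximal_Regularity_Operator}(ii) we may assume $\omega_0(\G)<0$; since $(\Ss(t))_{t\ge0}$ is bounded analytic, $\sigma(\G)$ then lies in an open sector of the left half--plane, so $i\R\subset\rho(\G)$ and $\sup_{s\neq0}\|sR(is,\G)\|<\infty$. Observe that $\{sR(is,\G):s\neq0\}$ is $\mathcal{R}$-bounded if and only if $N(s):=isR(is,\G)=\G R(is,\G)+I$ forms an $\mathcal{R}$-bounded set as $s$ ranges over $\R\setminus\{0\}$, the two sets differing only through the fixed scalar factor $i$ and the additive constant $I$, both of which preserve $\mathcal{R}$-bounds. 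Using that $\mathscr{MR}_p(0,T;X)$ is independent of $T$ \cite{Dore} and that $\omega_0(\G)<0$, maximal $L^p$--regularity of $\G$ on $[0,T]$ is equivalent to boundedness on $L^p(\R_+;X)$ of $f\mapsto\G\int_0^{\cdot}\Ss(\cdot-s)f(s)\,ds$, and, extending $f$ by $0$ and restricting the whole--line convolution, to boundedness on $L^p(\R;X)$ of $(\mathscr{R}_\infty f)(t):=\G\int_{-\infty}^t\Ss(t-s)f(s)\,ds$. Taking Fourier transforms and using $\int_0^\infty e^{-ist}\Ss(t)\,dt=R(is,\G)$ (the integral converges because $\omega_0(\G)<0$) identifies $\mathscr{R}_\infty$ as the Fourier multiplier operator with symbol $M(s)=\G R(is,\G)=N(s)-I$; since $I$ is a multiplier, $\G$ has maximal $L^p$--regularity if and only if $N$ is an $L^p(\R;X)$--Fourier multiplier.

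\emph{Sufficiency.} Suppose $\calS:=\{N(s):s\neq0\}$ is $\mathcal{R}$-bounded. By the operator--valued Mikhlin theorem of Weis \cite{LutzWeis2} (see also \cite{KunstmannWeis}), valid because $X$ is UMD, a $C^1$ map $N:\R\setminus\{0\}\to\calL(X)$ is an $L^p(\R;X)$--Fourier multiplier for every $p\in(1,\infty)$ provided both $\{N(s):s\neq0\}$ and $\{sN'(s):s\neq0\}$ are $\mathcal{R}$-bounded. The first is $\calS$. For the second, differentiating the resolvent, $\tfrac{d}{ds}R(is,\G)=-iR(is,\G)^2$, gives
\[
sN'(s)=isR(is,\G)+s^2R(is,\G)^2=N(s)-N(s)^2,
\]
where we used $s^2=-(is)^2$; since $\mathcal{R}$-bounded families are stable under sums and products, $\{sN'(s):s\neq0\}$ is $\mathcal{R}$-bounded, with constant at most $\mathcal{R}(\calS)+\mathcal{R}(\calS)^2$. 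Hence $N$, and therefore $M$, is an $L^p(\R;X)$--multiplier, so $\mathscr{R}_\infty\in\calL(L^p(\R;X))$ and $\G$ has maximal $L^p$--regularity.

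\emph{Necessity.} Conversely, if $\G$ has maximal $L^p$--regularity, then $M$ (hence $N$) is a bounded $L^p(\R;X)$--Fourier multiplier, so it remains to upgrade the norm bound on $\calS$ to an $\mathcal{R}$-bound. This follows from the general fact, valid in every Banach space, that maximal $L^p$--regularity implies $\mathcal{R}$-boundedness of $\{\lambda R(\lambda,\G):\mathrm{Re}\,\lambda=0,\ \lambda\neq0\}$ (see \cite{KunstmannWeis} or \cite{DenHiePru}): concretely one tests the bounded operator $\mathscr{R}_\infty$ on randomized, frequency--localized data $f(t,u)=\sum_k r_k(u)\,\psi(t)e^{is_kt}x_k$, with $\psi$ a fixed bump whose Fourier transform has support near the origin, integrates in $t$ and over the Rademacher variable $u$, and applies Kahane's inequality together with the observation that $\mathscr{R}_\infty$ acts on each packet $\psi e^{is_k\cdot}$ as multiplication by $M(s_k)$ modulo an error that vanishes as the support of $\hat\psi$ shrinks; this yields precisely the defining inequality of $\mathcal{R}$-boundedness for $\{M(s_k)\}$, hence for $\{N(s_k)\}$, equivalently for $\{s_kR(is_k,\G)\}$.

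\emph{Main obstacle.} The only genuinely hard ingredient is the operator--valued Mikhlin multiplier theorem used in the sufficiency step: its proof rests on vector--valued Littlewood--Paley theory, where one decomposes $\R\setminus\{0\}$ into dyadic frequency blocks, controls the attendant randomized square function by the UMD property of $X$ (Bourgain), and uses the $\mathcal{R}$-bound on $\{sN'(s)\}$ to control the variation of $N$ from one block to the next. This is exactly where UMD is indispensable, and why the characterization fails for general Banach spaces; in Hilbert spaces $\mathcal{R}$-boundedness reduces to norm boundedness and one recovers De Simon's theorem \cite{Simon}. I would cite \cite{LutzWeis2} and \cite{KunstmannWeis} for this multiplier theorem rather than reproduce its proof; the residual bookkeeping — that analyticity of $\Ss$ makes $\Ss\ast f$ lie in $D(\G)$ almost everywhere with $\G(\Ss\ast f)$ locally $p$-integrable, and that $M$ genuinely represents $\mathscr{R}_\infty$ — is routine.
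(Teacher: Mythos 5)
The paper gives no proof of this statement: it is quoted as Weis's theorem with only a citation to \cite{LutzWeis2}, so there is no in-paper argument to compare against. Your outline --- reduction to the operator-valued Fourier multiplier with symbol $\G R(is,\G)$, the operator-valued Mikhlin theorem in UMD spaces for sufficiency (with the correct identity $sN'(s)=N(s)-N(s)^2$ supplying the $\mathcal{R}$-bound on the derivative), and the randomized frequency-localization argument for necessity --- is precisely the standard proof from \cite{LutzWeis2} and \cite{KunstmannWeis}, and it is correct at the level of detail given.
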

\subsection{Robustness of analyticity under Staffans-Weiss perturbation}\label{sub:4.3}
	In this part, we study the analyticity of the perturbed semigroup $(\T^{cl}(t))_{t\geq 0}$ defined in (\ref{theorem:Hadd_Manzo_Rhandi}). We then assume, as in Section \ref{sec:3}, that {\bf(H1)}, {\bf(H2)} and {\bf(H3)} are satisfied.
\begin{theorem}\label{theorem:staffans-weiss-analytic}
		Let assumptions {\bf(H1)}to {\bf(H3)} be satisfied. Then the operator $(\calA,D(\calA))$ defined by \eqref{cald-def} generates a strongly continuous semigroup which is analytic whenever the semigroup generated by $A$ is.
	\end{theorem}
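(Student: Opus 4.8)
The plan is to exploit the resolvent identity from Theorem~\ref{theorem:Hadd_Manzo_Rhandi}(iv), namely $R(\la,\calA)=(I-\D_\la K)^{-1}R(\la,A)$ valid for $\la\in\rho(A)\cap\rho(\calA)$, together with the standard sectorial characterization of analyticity: a $C_0$-semigroup with generator $\calA$ is analytic iff there exist $\theta\in(\pi/2,\pi)$ and $M>0$ with $\Sigma_\theta\subset\rho(\calA)$ and $\|R(\la,\calA)\|\le M/|\la|$ on $\Sigma_\theta$ (after the normalization $\omega_0<0$ of Remark~\ref{remark:Maximal_Regularity_Operator}(ii), so that we may assume $0\in\rho(A)\cap\rho(\calA)$ and work on a sector). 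Since $A$ generates an analytic semigroup, such $\theta,M$ exist for $A$; the task is to transfer the estimate to $\calA$ by controlling $(I-\D_\la K)^{-1}$ on a (possibly smaller) sector $\Sigma_{\theta'}$, $\pi/2<\theta'<\theta$, away from a neighbourhood of the origin and then handling large $\la$ and the sector-vertex region.

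First I would record that $\D_\la K=R(\la,A_{-1})B\,C_\Lambda$ on the appropriate domain, and that by \eqref{transfer-function-ABC} the relevant object is essentially the transfer function $H(\la)=K\D_\la=C_\Lambda R(\la,A_{-1})B$: indeed $K\D_\la$ is bounded on $U$ for $\la$ in the resolvent set and $1\in\rho(K\D_\la)\iff 1\in\rho(\D_\la K)\iff\la\in\rho(\calA)$ by Theorem~\ref{theorem:Hadd_Manzo_Rhandi}(iii). So it suffices to show that on some sector $\Sigma_{\theta'}$ one has $\|H(\la)\|_{\calL(U)}\le 1/2$ (say), for then $I-\D_\la K$ is invertible with $\|(I-\D_\la K)^{-1}\|\le 2$, whence $\Sigma_{\theta'}\subset\rho(\calA)$ and $\|R(\la,\calA)\|\le 2M/|\la|$, giving analyticity of $\T^{cl}$. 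The key input for the decay of $H$ is admissibility: since $C$ is a $p$-admissible observation operator for the \emph{analytic} generator $A$, a Laplace-transform/Minkowski argument on the admissibility estimate \eqref{walid} yields $\|C\T(t)x\|\lesssim t^{-1/p}\|x\|$ for $x\in D(A)$ (this is the classical fact that admissibility for analytic semigroups is pointwise in time), and dually $\|R(\la,A_{-1})B u\|\lesssim |\la|^{-1/q}\|u\|$ on the sector; combining the two via $H(\la)=C_\Lambda R(\la,A_{-1})B$ and a semigroup factorization $R(\la,A_{-1})B=\int_0^\infty e^{-\la t}\T_{-1}(t)B\,dt$ split at $t=1/|\la|$ gives $\|H(\la)\|\le c\,|{\rm Re}\,\la|^{-1/p}|{\rm Re}\,\la|^{-1/q}\cdot(\cdots)$, i.e. a bound that tends to $0$ as $|\la|\to\infty$ uniformly on $\Sigma_{\theta'}$ with $\theta'<\pi/2+\epsilon$; more carefully one uses that on a sector $\mathrm{Re}\,\la\asymp|\la|$ only for $|\arg\la|$ bounded away from $\pi/2$, so one works on $\Sigma_{\theta'}$ and uses the analytic extension of the admissibility bound. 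One then also needs the bound $\le 1/2$ on a \emph{full} sector near the vertex: here one uses that by the normalization $0\in\rho(A)$ and continuity, $H(\la)\to 0$ as $|\la|\to\infty$ while $H$ is bounded on compact subsets of $\rho(A)$; boundedness on the whole sector follows from $\calR$-boundedness of $\{H(is):s\ne0\}$ (Remark~\ref{Transfer_Function_R-boundedness}) together with sectoriality, or directly from the analytic-semigroup admissibility estimates which are scale-invariant.

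The main obstacle will be the \emph{uniform} smallness of $H(\la)$ on a sector strictly containing the right half-plane axis — the limit $\lim_{s\to+\infty}H(s)=0$ of \eqref{limit-trans-func} is only along the real axis, and upgrading it to uniform decay on $\Sigma_{\theta'}$ requires genuinely using analyticity of $\T$ (to get the pointwise $t^{-1/p}$ growth of $C\T(t)$ and its sectorial extension), not merely $p$-admissibility. I expect the argument to proceed by: (1) reducing via Theorem~\ref{theorem:Hadd_Manzo_Rhandi}(iii)--(iv) to estimating $(I-K\D_\la)^{-1}$; (2) writing $K\D_\la=H(\la)$ and establishing $\sup_{\la\in\Sigma_{\theta'},\,|\la|\ge r}\|H(\la)\|<1$ for suitable $\theta'\in(\pi/2,\theta)$ and $r>0$ using the sectorial admissibility estimates; (3) separately covering the compact region $\{\la\in\Sigma_{\theta'}:|\la|\le r\}$, which after the shift $\omega_0<0$ lies in $\rho(\calA)$ by Theorem~\ref{theorem:Hadd_Manzo_Rhandi}(iii) and on which $\la\mapsto\|\la R(\la,\calA)\|$ is continuous hence bounded; (4) concluding $\|R(\la,\calA)\|\le M'/|\la|$ on $\Sigma_{\theta'}$, which is the sectoriality characterization of analyticity for the $C_0$-semigroup $\T^{cl}$ produced by Theorem~\ref{staff_wei}. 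Alternatively, if one prefers to avoid the delicate sector estimate, one can invoke Theorem~\ref{theorem:Weis_Characterization}/Theorem~\ref{theorem:staffans-weiss-R-boundedness} machinery, but since analyticity is needed as a \emph{hypothesis} there, the direct resolvent route above is the cleaner one.
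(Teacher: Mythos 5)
Your overall architecture (resolvent identity from Theorem \ref{theorem:Hadd_Manzo_Rhandi}, Le Merdy-type decay of $\|CR(\la,A)\|$ and $\|R(\la,A_{-1})B\|$ coming from admissibility for an analytic generator, then a sectoriality criterion) matches the paper's, but the step your argument actually hinges on is one that fails: the uniform smallness $\sup_{\la\in\Sigma_{\theta'},\,|\la|\ge r}\|H(\la)\|\le 1/2$. The hypothesis of regularity only gives \eqref{limit-trans-func}, i.e. $H(s)\to 0$ along the positive real axis, and there is no way to upgrade this to uniform decay on a sector (or even on a right half-plane) from the given assumptions: the factorization $H(\la)=C_\Lambda R(\la,A_{-1})B$ contains a \emph{single} resolvent, so you cannot multiply the two estimates \eqref{LeMerdy} — one would need $C_\Lambda(\text{resolvent})(\text{resolvent})B$ for that, and the attempt to create a second resolvent via the identity $H(\la)=H(\mu)+(\mu-\la)CR(\la,A)\D_\mu$ produces a factor $|\mu-\la|\,|\la|^{-1/q}$ that blows up for $q>1$. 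Moreover, even granting $\|K\D_\la\|\le 1/2$ on $\calL(U)$, your jump to $\|(I-\D_\la K)^{-1}\|\le 2$ and hence $\|R(\la,\calA)\|\le 2M/|\la|$ is not licensed, since $\D_\la K$ lives on a different space with $K$ unbounded; unwinding it via $(I-\D_\la K)^{-1}=I+\D_\la(I-K\D_\la)^{-1}K$ puts you right back at the second form of the resolvent identity, where you still need the decay of the outer factors.

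The missing idea is that one does not need $H(\la)$ to be \emph{small}, only $(I-H(\la))^{-1}$ to be \emph{uniformly bounded}, and that is exactly what hypothesis {\bf(H3)} delivers for free: since $I_U$ is an admissible feedback, Theorem \ref{staff_wei} says $(\calA,B,C_\Lambda)$ is itself a regular (in particular well-posed) system, so its transfer function $\mathscr{H}^{cl}$ is bounded on some right half-plane $\C_\al$, and $(I-C_\Lambda\D_\la)^{-1}=I+\mathscr{H}^{cl}(\la)$ gives the uniform bound \eqref{transfer-estim}. Plugging this into the sandwiched form \eqref{Resolvent-calA}, $R(\la,\calA)=R(\la,A)+R(\la,A_{-1})B\,(I-C_\Lambda\D_\la)^{-1}CR(\la,A)$, the two outer factors contribute $|\la|^{-1/p}\cdot|\la|^{-1/q}=|\la|^{-1}$ by \eqref{Good-estim-Analy}, and the middle factor is merely bounded. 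Finally, the paper works entirely on a right half-plane $\C_{\om_2}$ and invokes the half-plane characterization of analyticity (\cite[Theorem 12.31]{ReRo}) rather than a sector $\Sigma_{\theta'}$ with $\theta'>\pi/2$; this sidesteps your steps (2)--(3) entirely, since transfer functions of well-posed systems are only guaranteed bounded on half-planes, not on sectors. With these two corrections your outline becomes the paper's proof.
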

	\begin{proof}
		According to Theorem \ref{theorem:Hadd_Manzo_Rhandi} (i) $\calA$ is a generator of a strongly continuous semigroup $\T^{cl}:=(\T^{cl}(t))_{t\ge 0}$ on $X$. Now assume that $A$ generates an analytic semigroup $\T$ on $X$. Then by \cite[Theorem 12.31]{ReRo}, there exist constants $\beta\in \mathbb{R}$ and $M_1>0$ such that $\mathbb{C}_\beta\subset \rho(A)$ and
		\begin{equation}\label{analytic-A-estim}
		\left\|R(\lambda,A)\right\| \leq \frac{M_1}{|\lambda-\beta|} ,\qquad \lambda\in \mathbb{C}_\beta.
		\end{equation}
		In \cite[Lemma 2.3]{LeMerdy}, it is proved that the admissibility of $C$ (and $B$ by the same method) for $A,$ imply that there exists $\ga \in ]\frac{\pi}{2},\pi[ $ such that
		\begin{align}\label{LeMerdy}
		\sup_{z\in\Si_\ga}\vert z\vert^\frac{1}{q} \Vert CR(z,A) \Vert < +\infty\quad\text{and}\quad \sup_{z\in\Si_\ga}\vert z\vert^\frac{1}{p} \Vert R(z,A_{-1})B \Vert < +\infty,
		\end{align}
		where $\frac{1}{p}+\frac{1}{q}=1$. \\Put		
		\begin{align}\label{Good-estim-Analy}
		M_2:=\sup_{z\in\mathbb{C}_0}\vert z\vert^\frac{1}{q} \Vert CR(z,A) \Vert < +\infty\quad\text{and}\quad M_3:=\sup_{z\in\mathbb{C}_0}\vert z\vert^\frac{1}{p} \Vert R(z,A_{-1})B \Vert < +\infty,
		\end{align}
		Let $\om_1:=\max\{\om_0(A),\om_0(\calA)\}$. Due to Theorem \ref{theorem:Hadd_Manzo_Rhandi} (ii), for any $\la\in\C_{\om_1},$ we have \begin{align}\label{Resolvent-calA}
		R(\la,\calA)&=(I-\D_\la K)^{-1}R(\la,A)\cr &= R(\la,A)+R(\la,A_{-1})B (I_U-C_\Lambda \D_\la)^{-1}CR(\la,A).
		\end{align}
		On the other hand, $(I_U-C_\Lambda \D_\la)^{-1}=I+\mathscr{H}^{cl}(\la),$ where $\mathscr{H}^{cl}$ is the transfer function of the (closed-loop) regular linear system generated by $(\calA,B,C_\Lambda)$. Hence there exists $\al>\om_0(\calA)$ such that
		\begin{equation}\label{transfer-estim}
		\nu:=\sup_{\C_\al}\left\|(I_U-C_\Lambda \D_\la)^{-1}\right\|<\infty.
		\end{equation}
		Now let $\om_2:=\max\{0,\alpha,\beta,\om_1\}$. Combining \eqref{analytic-A-estim}, \eqref{Good-estim-Analy}, \eqref{Resolvent-calA} and \eqref{transfer-estim}, we obtain
		\begin{equation*}
		\|R(\la,\calA)\|\le \frac{\tilde{M}}{|\la-\om_2|},
		\end{equation*}
		for all $\la\in\C_{\om_2}$, where $\tilde{M}=M_{1}+\nu M_{2}M_{3}$. Finally, by virtue of \cite[Theorem 12.31]{ReRo} the semigroup generated by $\calA$ is analytic.
	\end{proof}
\subsection{Staffans-Weiss perturbation for maximal regularity in UMD spaces}\label{sec:umd}
	We are still working under the setting of Section \ref{sec:3} and we will study the maximal regularity of the operator $\calA$ in the case of UMD spaces.
\begin{theorem}\label{theorem:staffans-weiss-R-boundedness}
		Let conditions {\bf(H1)} to {\bf(H3)} be satisfied with $X,U$ be UMD-spaces and $A$ generates a bounded analytic semigroup. Assume  that there exists $\om>\om_1$ such that the sets $\{s^\frac{1}{p} R(\om+is,A_{-1})B;s\neq 0\}$ and $\{s^\frac{1}{q} C R(\om+is,A);s\neq 0\}$ are $\mathcal{R}$-bounded. If $A\in \mathscr{MR}(0,T;X)$ then $\calA \in \mathscr{MR}(0,T;X)$.
	\end{theorem}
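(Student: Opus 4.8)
The plan is to exploit the resolvent identity from Theorem~\ref{theorem:Hadd_Manzo_Rhandi}(ii), namely
\begin{equation*}
R(\la,\calA)= R(\la,A)+R(\la,A_{-1})B\,(I_U-C_\Lambda \D_\la)^{-1}\,CR(\la,A),\qquad \la\in\C_{\om_1},
\end{equation*}
together with Weis' characterization (Theorem~\ref{theorem:Weis_Characterization}) applied on the line $\om+i\R$. First I would note that by Remark~\ref{remark:Maximal_Regularity_Operator}(ii) and the fact that maximal regularity is stable under bounded shifts, it suffices to prove $\calA-\om\in\mathscr{MR}(0,T;X)$; equivalently, since $A$ already generates a bounded analytic semigroup and $\calA$ generates an analytic semigroup by Theorem~\ref{theorem:staffans-weiss-analytic}, we only need the set $\{s\,R(\om+is,\calA);s\ne 0\}$ to be $\calR$-bounded. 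Substituting $\la=\om+is$ into the resolvent identity, this set splits as a sum
\begin{equation*}
s\,R(\om+is,\calA)= s\,R(\om+is,A)+\bigl(s^{1/p}R(\om+is,A_{-1})B\bigr)\,(I_U-C_\Lambda\D_{\om+is})^{-1}\,\bigl(s^{1/q}CR(\om+is,A)\bigr).
\end{equation*}

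The $\calR$-boundedness of the first summand $\{s\,R(\om+is,A);s\ne 0\}$ follows from the hypothesis $A\in\mathscr{MR}(0,T;X)$ via Theorem~\ref{theorem:Weis_Characterization} (after noting that shifting by $\om$ does not affect $\calR$-boundedness on a vertical line, since $A-\om$ still generates a bounded analytic semigroup with maximal regularity). For the second summand I would use the standard permanence properties of $\calR$-boundedness: a product of $\calR$-bounded families is $\calR$-bounded, and composing an $\calR$-bounded family with a uniformly bounded family of operators (acting between the relevant spaces) preserves $\calR$-boundedness. The two outer factors are $\calR$-bounded precisely by the two hypotheses of the theorem. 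The remaining point is to verify that the middle factor $\{(I_U-C_\Lambda\D_{\om+is})^{-1};s\ne 0\}$ is uniformly bounded in $\calL(U)$: this is exactly estimate~\eqref{transfer-estim} from the proof of Theorem~\ref{theorem:staffans-weiss-analytic}, valid on $\C_\al$ for some $\al>\om_0(\calA)$, so after possibly enlarging $\om$ we have $\nu:=\sup_{s\ne0}\|(I_U-C_\Lambda\D_{\om+is})^{-1}\|<\infty$. (One could also invoke Remark~\ref{Transfer_Function_R-boundedness} to get that $\{\mathscr{H}^{cl}(\om+is):s\ne0\}$ is itself $\calR$-bounded, which gives the same conclusion since $(I_U-C_\Lambda\D_\la)^{-1}=I+\mathscr{H}^{cl}(\la)$.)

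Assembling these pieces, $\{s\,R(\om+is,\calA);s\ne0\}$ is a sum of two $\calR$-bounded families, hence $\calR$-bounded. Since $\calA$ generates an analytic semigroup (Theorem~\ref{theorem:staffans-weiss-analytic}) — and, upon shifting, a bounded analytic one — on the UMD space $X$, Theorem~\ref{theorem:Weis_Characterization} gives $\calA-\om\in\mathscr{MR}(0,T;X)$, and therefore $\calA\in\mathscr{MR}(0,T;X)$ by the shift-invariance recalled in Remark~\ref{remark:Maximal_Regularity_Operator}(ii). I expect the main subtlety to be bookkeeping with the shift parameter: one must choose a single $\om$ (larger than $\om_1$, larger than the $\al$ of~\eqref{transfer-estim}, and large enough that $A-\om$ and $\calA-\om$ are bounded analytic) so that all three families — the two $\calR$-bounded hypotheses and the uniformly bounded transfer-function family — are simultaneously available on the line $\om+i\R$; the $\calR$-boundedness of the hypotheses is assumed only for \emph{one} such $\om$, but since $\{s R(\om'+is,A_{-1})B\}$ for $\om'\ge\om$ is dominated in an $\calR$-bounded sense by the case of $\om$ (using the resolvent identity and that $\{R(\om'+is,A)\}$ is uniformly bounded), enlarging $\om$ is harmless.
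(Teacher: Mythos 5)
Your overall strategy is the same as the paper's: shift by $\om$, insert $\la=\om+is$ into the factorized resolvent identity, and verify Weis' $\calR$-boundedness criterion for $\{sR(is,\calA-\om);s\neq0\}$ term by term. The decomposition, the treatment of the first summand, and the use of the two hypotheses for the outer factors of the second summand all match the paper. However, your primary justification for the middle factor contains a genuine error: it is \emph{not} true that composing an $\calR$-bounded family with a merely uniformly bounded family of operators preserves $\calR$-boundedness. (Take the $\calR$-bounded family to be the singleton $\{I\}$ and the uniformly bounded family to be any bounded, non-$\calR$-bounded set; their product is the latter.) Uniform boundedness of $\{(I_U-C_\Lambda\D_{\om+is})^{-1};s\neq0\}$ as in \eqref{transfer-estim} is therefore not sufficient; you need this family to be $\calR$-bounded. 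Your parenthetical remark is in fact the essential step, and it is exactly what the paper does: since $(\calA^\om,B,C_\Lambda)$ generates a regular linear system (Theorem \ref{staff_wei} together with \textbf{(H3)}) whose transfer function satisfies $(I-H^\om(is))^{-1}=I_U+H^{cl,\om}(is)$, Remark \ref{Transfer_Function_R-boundedness} (the Haak--Kunstmann observation that transfer functions of regular systems with bounded analytic generators are $\calR$-bounded on the imaginary axis) yields the required $\calR$-boundedness of the middle factor. That argument must carry the proof; as written it is demoted to an aside while the main chain of reasoning rests on a false permanence property.

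A secondary point: once the middle factor is handled via the $\calR$-boundedness of $\{H^{cl,\om}(is);s\neq0\}$ rather than via the uniform bound \eqref{transfer-estim}, there is no need to enlarge $\om$ at all. The hypothesis already supplies $\om>\om_1=\max\{\om_0(A),\om_0(\calA)\}$, so $i\R\setminus\{0\}\subset\rho(A-\om)\cap\rho(\calA-\om)$ and both shifted semigroups are bounded analytic; the paper works with this single $\om$ throughout. Your closing paragraph about dominating $\{sR(\om'+is,A_{-1})B\}$ for $\om'\ge\om$ ``in an $\calR$-bounded sense'' is therefore unnecessary, and the domination claim is itself unjustified as stated.
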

	\begin{proof}
		Assume that $A\in \mathscr{MR}(0,T;X)$ and let $\om>\om_1$ such that the sets $\{s^\frac{1}{p} R(\om+is,A_{-1})B;s\neq 0\}$ and $\{s^\frac{1}{q} C R(\om+is,A);s\neq 0\}$ are $\mathcal{R}$-bounded. Denote $\calA^\om:=-\om+\calA$ and $A^\om:=-\om+A$ with domains $D(\calA^\om)=D(\calA)$ and $D(A^\om)=D(A),$ respectively. These operators are generators of analytic semigroups on $X$. We first observe that $A^\om\in \mathscr{MR}(0,T;X)$. To prove our theorem it suffice to show that $\calA^\om\in \mathscr{MR}(0,T;X)$. Clearly, $\om_0(A^\om)=\om_0(A)-\om<0$ and $\om_0(\calA^\om)=\om_0(\calA)-\om<0$, so   that $i\R\backslash\{0\}\subset \rho(A^\om)\cap\rho(\calA^\om)$. It is not difficult to show that $(A^\om,B,C)$ is also a regular linear system on $X,U,U$ with the identity operator $I_U:U\to U$ as an admissible feedback. Now according to Theorem \ref{staff_wei}, the following operator
		\begin{equation*}
		A^{cl,\om}:=A^\om_{-1}+BC_\Lambda, \quad D(A^{cl,\om})=\left\{x\in D(C_\Lambda): (A^\om_{-1}+BC_\Lambda)x\in X\right\}.
		\end{equation*}
		As $A^\om_{-1}=A_{-1}-\om$, then $D(A^{cl,\om})=D(\calA),$ and $A^{cl,\om}=\calA^\om$ due to Theorem \ref{theorem:Hadd_Manzo_Rhandi} (i).
		As in \eqref{Resolvent-calA} we have
		\begin{align}\label{formu-is}
		\begin{split}
		sR(is,\calA^\om) &= sR(is,A^\om) + s^\frac{1}{p}R(is,A_{-1}^\om)B(I-H^\om(is))^{-1} s^\frac{1}{q} CR(is,A^\om)\cr & =sR(is,A^\om) + s^\frac{1}{p}R(\om+is,A_{-1})B(I-H^\om(is))^{-1} s^\frac{1}{q} CR(\om+is,A),
		\end{split}
		\end{align}
		where $H^\om(\la)=C_\Lambda R(\la,A_{-1}^\om)B,\;\la\in\rho(A)$, is the transfer function of the regular linear system generated by $(A^\om,B,C)$. Using the assumptions,  the equation \eqref{formu-is} and Theorem \ref{theorem:Weis_Characterization}, it suffice only to show that the set $\{(I-H^\om(is))^{-1}:s\neq 0\}$ is $\mathcal{R}$-bounded. In fact, by Theorem \ref{theorem:Hadd_Manzo_Rhandi} (i) and  the condition { \bf (H3)} the triple operator $(\calA^\om,B,C_\Lambda)$ generates a regular linear system and its associated transfer function is
		\begin{equation*}
		H^{cl,\om}(is) = (I-H^\om(is))^{-1}H^\om(is),\qquad s\neq 0,
		\end{equation*}
		which implies that
		\begin{equation*}
		(I-H^\om(is))^{-1}=I_U+H^{cl,\om}(is),\qquad s\neq 0.
		\end{equation*}
		According to Remark \ref{Transfer_Function_R-boundedness}, the set $\{H^{cl,\om}(is):s\neq 0\}$ is $\mathcal{R}$-bounded. Hence $\{(I-H^\om(is))^{-1}:s\neq 0\}$ is $\mathcal{R}$-bounded. This ends the proof.
	\end{proof}
\subsubsection{Example: PDE with a boundary conditions}
	Let $p,q\in(1,\infty)$ such that $\frac{1}{p}+\frac{1}{q}=1$ and $1<p<3$. In this section, we deal with the following problem:
	\begin{equation}
	\label{PDE}
	\begin{cases}
	\displaystyle{\frac{\partial w(t,s)}{\partial t}  = \frac{\partial^2 w(t,s)}{\partial s^2}} + f(t) & \mbox{ } t\in [0,T]\ s\in (0,1),  \\
	\displaystyle{\frac{\partial w(t,1)}{\partial s}} = w(t,1)-w(t,0), & \mbox{}  \\
	\displaystyle{\frac{\partial w(t,0)}{\partial s}} = 0, & \mbox{}  \\
	w(0,s) = 0. & \mbox{}
	\
	\end{cases}
	\end{equation}
Let $X=L^p(0,1)$ and $A_mg:=g''$ with domain $Z=\{g\in W^{2,p}(0,1),g'(0)=0 \}$. \\
	We define on $Z$ the operator:
	\begin{align*}
	G:Z&\to \mathbb{R} \\
	g&\mapsto g'(0).
	\end{align*}
	We also define the unbounded operator $K:Z\to \mathbb{R}$ by $Kg:=g(1)-g(0)$. One can see that under this setting, the problem (\ref{PDE}) can be transformed to the following :
	\begin{equation*}
	\begin{cases}
	\dot{x}(t) = A_m x(t) , & \mbox{ } 0\leq t \leq T,\quad x(0)=0 \\
	Gx(t) = Kx(t), & \mbox{  } 0\leq t \leq T.
	\
	\end{cases}
	\end{equation*}
	Define the operator:
	$$
	A=A_m,\qquad D(A)=Ker G=\{g\in W^{2,p}(0,1),g'(0)=0\quad and\quad g'(1)=0 \}.
	$$
	It is well-known that the operator $A$ has maximal$L^p$- regularity.\\\\
	We set:
	$$
	\mathbb{D}_\lambda := (G_{|ker(\lambda -A_m)})^{-1},\qquad B:=(\lambda - A_{-1})\mathbb{D}_\lambda, \qquad C :=Ki
	$$
	\\
	 \\
	Now the main result is the following
	\begin{theorem}
		The operator $A^{cl}$ defined by :
		$$
		A^{cl} g = g'',\qquad D(A^{cl}):=\{g\in W^{2,p}(0,1),g'(0)=0\quad and\quad g'(1)=g(1)-g(0) \}
		$$
		has maximal $L^r$-regularity for every $r>1$ and the problem (\ref{PDE}) has a unique solution $w\in W^{1,r}([0,T];X)\cap L^{r}([0,T];D(A^{cl}))$ such that:
		$$
		\Vert \dot{w} \Vert_{L^r([0,T];X)} + \Vert w \Vert_{L^r([0,T];X)} + \Vert w''\Vert_{L^r([0,T];X)} \leq C\Vert f \Vert_{L^r([0,T];X)},
		$$
		for some constant $C>0$ anf $f\in L^r([0,T];X)$.
	\end{theorem}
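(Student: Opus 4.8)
The plan is to read problem \eqref{PDE} as the abstract equation \eqref{calA-only} with $X=L^p(0,1)$, $U=\R$ and $\calA=A^{cl}$ (there is no additive perturbation $P$ here, so we go straight to Theorem \ref{theorem:staffans-weiss-R-boundedness} rather than through the $P$-perturbed machinery), and to verify all of its hypotheses. The three routine checks first. {\bf(H1)}: the boundary operator $G:Z\to\R$ of \eqref{PDE} is onto, since for $c\in\R$ the function $g(s)=\tfrac c2 s^2$ lies in $Z$ and realises the value $c$. {\bf(H2)}: $A=A_m$ with $D(A)=\ker G$ is the Neumann Laplacian on $(0,1)$, which is well known to generate a bounded analytic $C_0$-semigroup on $L^p(0,1)$ and to have maximal $L^p$-regularity (as already recalled). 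And $X=L^p(0,1)$ (with $1<p<\infty$) and $U=\R$ are UMD spaces.

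The substantive point is {\bf(H3)}. I would first compute the Dirichlet operator explicitly: solving $\lambda g-g''=0$, $g'(0)=0$, $g'(1)=c$ gives
\[
\mathbb{D}_\lambda c=\frac{c\,\cosh(\sqrt\lambda\,\cdot\,)}{\sqrt\lambda\,\sinh\sqrt\lambda},\qquad \lambda\in\rho(A),
\]
so that $B=(\lambda-A_{-1})\mathbb{D}_\lambda$, $R(\lambda,A_{-1})B=\mathbb{D}_\lambda$, and by \eqref{transfer-function-ABC} the scalar transfer function is $H(\lambda)=K\mathbb{D}_\lambda=\dfrac{\cosh\sqrt\lambda-1}{\sqrt\lambda\,\sinh\sqrt\lambda}$, which decays like $|\lambda|^{-1/2}$ and tends to $0$ as $\mathrm{Re}\,\lambda\to\infty$. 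Since $C=Ki$ and $B$ are built from boundary traces on $W^{2,p}(0,1)$, their $p$-admissibility for $A$ reduces — via \eqref{LeMerdy}, equivalently via boundedness of $C(\mu-A)^{-\beta}$ and of $(\mu-A_{-1})^{-\beta'}B$ for suitable $\beta,\beta'<1$ — to the fractional-domain description of the Neumann Laplacian and the embeddings $W^{2\beta,p}(0,1)\hookrightarrow C[0,1]$; this is where the standing restriction $1<p<3$ is used. Then $B,C$ admissible together with the $|\lambda|^{-1/2}$-decay of $H$ (which makes the input--output map $\mathbb{F}_{t_0}$ a contraction for small $t_0$) give that $(A,B,C)$ is well-posed, regular with feedthrough $0$, and $I_U$ an admissible feedback, i.e. {\bf(H3)}. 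By Theorem \ref{theorem:Hadd_Manzo_Rhandi}(i) together with \eqref{cald-def} the operator $\calA=A^{cl}$ is then precisely the one with domain $\{g\in W^{2,p}(0,1):g'(0)=0,\ g'(1)=g(1)-g(0)\}$ in the statement.

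There remains the $\mathcal{R}$-boundedness of $\{|s|^{1/p}R(\om+is,A_{-1})B:s\neq0\}$ and $\{|s|^{1/q}CR(\om+is,A):s\neq0\}$ for some $\om>\om_1:=\max\{\om_0(A),\om_0(\calA)\}$, these acting into $X_{-1}$ and into $X^*=L^q(0,1)$ respectively. Since $U=\R$ is one-dimensional, both families are parametrised \emph{curves}, so it suffices to show their ranges are norm-relatively-compact (a norm-relatively-compact set of operators is automatically $\mathcal{R}$-bounded). For the first, $R(\om+is,A_{-1})B=\mathbb{D}_{\om+is}$ and the explicit formula gives $\|\mathbb{D}_{\om+is}\|_{L^p}\sim|s|^{-1/2-1/(2p)}$, so $|s|^{1/p}\|\mathbb{D}_{\om+is}\|_{L^p}\sim|s|^{(1-p)/(2p)}\to0$ as $|s|\to\infty$ (and trivially as $s\to0$); as $s\mapsto\mathbb{D}_{\om+is}$ is norm-continuous, the range together with $\{0\}$ is norm-compact. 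For the second, $CR(\lambda,A)$ is represented by the $L^q(0,1)$-kernel $\sigma\mapsto G_\lambda(1,\sigma)-G_\lambda(0,\sigma)$, where $G_\lambda$ is the Neumann Green's function, whose $L^q$-norm is of order $|s|^{-1/2-1/(2q)}$ on $\mathrm{Re}\,\lambda=\om$, and the same compactness argument applies. (Alternatively one may write $CR(\lambda,A)=[C(\mu-A)^{-\beta}]\,[(\mu-A)^{\beta}R(\lambda,A)]$ and invoke the $\mathcal{R}$-sectoriality of $A$ — which holds by Theorem \ref{theorem:Weis_Characterization} since $A\in\mathscr{MR}(0,T;X)$ in the UMD space $X$ — choosing $\beta\in(\tfrac1{2p},\tfrac1p]$, and likewise for the $B$-term.)

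With all hypotheses verified, Theorem \ref{theorem:staffans-weiss-R-boundedness} yields $\calA=A^{cl}\in\mathscr{MR}(0,T;X)$, i.e. (by the $L^p$-independence of maximal regularity, \cite{Dore}) $A^{cl}\in\mathscr{MR}_r(0,T;X)$ for every $r\in(1,\infty)$. Since \eqref{PDE} is exactly \eqref{calA-only} for this $\calA$, for $f\in L^r([0,T];X)$ it has a unique solution $w(t)=z(t)$ with $z\in W^{1,r}([0,T];X)\cap L^r([0,T];D(A^{cl}))$, and because $D(A^{cl})\subset W^{2,p}(0,1)$ carries a graph norm equivalent to the $W^{2,p}$-norm, \eqref{max_reg_estim} becomes $\|\dot w\|_{L^r([0,T];X)}+\|w\|_{L^r([0,T];X)}+\|w''\|_{L^r([0,T];X)}\le C\|f\|_{L^r([0,T];X)}$. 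I expect the only real difficulty to be {\bf(H3)} and the two $\mathcal{R}$-boundedness conditions: they rest on the explicit Dirichlet operator and on sharp resolvent decay along the lines $\mathrm{Re}\,\lambda=\om$, together with the fractional-order bookkeeping that accounts for the restriction $1<p<3$.
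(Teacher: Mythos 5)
Your overall strategy coincides with the paper's: verify \textbf{(H1)}--\textbf{(H3)} and the two $\mathcal{R}$-boundedness hypotheses, then invoke Theorem \ref{theorem:staffans-weiss-R-boundedness}; your explicit formula for $\D_\la$ and the resulting decay $\|\D_\la\|\sim|\la|^{-(p+1)/(2p)}$ are exactly what the paper uses. But your central step for the $\mathcal{R}$-boundedness rests on a false principle: a norm-relatively-compact set of operators is \emph{not} automatically $\mathcal{R}$-bounded, not even when the domain or the range is one-dimensional. All bounded subsets of $\calL(E,F)$ are $\mathcal{R}$-bounded only when $E$ has cotype $2$ and $F$ has type $2$. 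Concretely, for $1<p<2$ take disjointly supported normalized $f_j\in L^p(0,1)$, set $a_j=(\log(j+1))^{-1}$ and $v_j=a_jf_j$: then $\{v_j\}\cup\{0\}$ is norm-compact in $\calL(\R,L^p)\cong L^p$, yet $\bigl\|\sum_j r_jc_jv_j\bigr\|_p=\bigl(\sum_j|c_ja_j|^p\bigr)^{1/p}$ for every choice of signs, and with $c_j=1$ for $j\le N$ this grows like $N^{1/p}/\log N\gg N^{1/2}\asymp\int_0^1|\sum_{j\le N}r_j(s)c_j|\,ds$; so the set is not $\mathcal{R}$-bounded. Hence the estimate $|s|^{1/p}\|\D_{\om+is}\|\to0$ plus norm-continuity does not give $\mathcal{R}$-boundedness (it would if you additionally proved integrability of the derivative of the curve, the standard sufficient criterion, but you did not). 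This is exactly why the paper runs the Haak--Kunstmann $l$-admissibility argument: it factors $C$ through the intermediate space $\tilde Z=W^{1,p}(0,1)$, on which $C$ is bounded and for which $\{\sqrt{s}\,R(s+1,A);s>0\}$ is $\mathcal{R}$-bounded, and handles $B$ by duality via $B^*=\delta_1$. Your parenthetical alternative (factorization through fractional domains plus $\mathcal{R}$-sectoriality) is the viable route and is essentially the paper's, but you only sketch it.

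A second, smaller gap is in \textbf{(H3)}: admissibility of $B$ and of $C$ together with $H(\la)=O(|\la|^{-1/2})$ does not, in an $L^p$/Banach-space setting, imply that $\F_{t_0}$ is even bounded, let alone a contraction for small $t_0$ --- sup-bounds on the transfer function control the input--output norm only on $L^2$ of Hilbert spaces. The paper obtains well-posedness, regularity and admissibility of the identity feedback as a package from \cite[Theorem 2.4]{ABE}, whose hypotheses are the Favard-space inclusion $Range(\D_\la)\subset F^A_{1-\beta}$ (your decay estimate, with $\beta=\tfrac{p-1}{2p}$) and $D((\la-A)^\gamma)\subset D(C_\Lambda)$ with $\beta+\gamma<1$; this is where the restriction $1<p<3$ actually enters. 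You gesture at these fractional-domain conditions, but the passage from admissibility of $B$ and $C$ to the full statement of \textbf{(H3)} is not closed.
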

	\begin{proof}
		
		We will show that all conditions of Theorem \ref{theorem:staffans-weiss-R-boundedness} are satisfied. According to \cite[Theorem 2.4]{ABE}, to show that the triple $(A,B,C)$ generates a regular linear system, all we have to show is that there exist $\beta \in [0,1]$ and $\gamma \in (0,1]$ such that :
		
		\begin{enumerate}
			\item[(i)] $Range(\mathbb{D}_\lambda) \subset F_{1-\beta}^A$(where $F_{\alpha}^A$ is the Favard space of $A$ of order $\alpha$, see \cite{EngNag})
			\item[(ii)] $D[(\lambda - A)^\gamma] \subset D(C_\wedge)$
			\item[(iii)] $\beta + \gamma <1$.
		\end{enumerate}
		Let us prove the above three propositions:
		
		\item[(i)] By simple calculation, one can see that there exists $\lambda_0  > 0$ such that
		$$
		\sup_{\lambda>\lambda_0}\Vert \lambda^\frac{p+1}{2p} \mathbb{D}_\lambda \Vert <+\infty.
		$$
		This fact implies (see \cite[Lemma A.1]{ABE}) that $Range(\mathbb{D}_\lambda) \subset F_{\frac{p+1}{2p}}^A$ for some $\lambda\in \rho(A)$. We take $\beta = 1-\frac{p+1}{2p} = \frac{p-1}{2p}$.
		
		\item[(ii)] According to \cite[Lemma A.2]{ABE}, we have to show that for $\alpha \in (0,1)$ and every $\rho\geq \rho_0>0$ we have
		$$
		\vert Cg\vert \leq M(\rho^\alpha \Vert g\Vert_p + \rho^{\alpha-1}\Vert g''\Vert_p)\quad g\in D(A),
		$$
		for some constant $M>0$.\\ For $g\in D(A)$ we know that
		$$
		g(1)= g(0) + \int_0^1 g'(s)ds
		$$
		then we can easily show that :
		$$
		\vert g(1)-g(0)\vert \leq \Vert g'\Vert_p.
		$$
		By \cite[Example III.2.2]{EngNag}, we have for $\epsilon>0$
		$$
		\Vert g'\Vert_p \leq \frac{9}{\epsilon}\Vert g\Vert_p + \epsilon \Vert g''\Vert_p.
		$$
		By taking $\rho = \epsilon^{-3}$, $\alpha = \frac{1}{3}$ and $\gamma \in (\frac{1}{3},\frac{1}{p})$, we show the assertion.
		\item[(iii)] Clearly we have $\beta + \gamma <1$

		Hence, \cite[Theorem 2.4]{ABE} implies that the operators $B$ and $C$ are $r$-admissible for $\frac{2p}{p+1}<r<\frac{1}{\gamma}$ and the triple $(A,B,C)$ generates a regular system and the operator identity is an admissible feedback. For instance we can take $r=2$.
		
		To show that the sets $\{s^\frac{1}{2} CR(is,A);s\neq 0\}$ and $\{s^\frac{1}{2} R(is,A_{-1})B;s\neq 0\}$ are $\mathcal{R}$-bounded, it is sufficient to show that $C$ and $B$ are $l$-admissible ($l$-admissibility is more general than admissibility, see \cite{HaaKun} for definitions), namely, it is sufficient to show that the sets $\{\sqrt{s}CR(s,A);s>0\}$ and $\{\sqrt{s}R(s,A_{-1})B;s>0\}$ are $\mathcal{R}$-bounded (see \cite{HaaKun}, page 514), which is equivalent to show that the sets $\{\sqrt{s}CR(s+1,A);s>0\}$ and $\{\sqrt{s}R(s+1,A_{-1})B;s>0\}$ are $\mathcal{R}$-bounded
		
		In order to prove that $\{\sqrt{s}CR(s+1,A);s>0\}$ is $\mathcal{R}$-bounded, we follow the example in \cite{HaaKun} (page 528) and the technique used there to show the $\mathcal{R}$-boundedness of the above set, namely it suffices to find a space $\tilde{Z}$ such that $D(A)\subset \tilde{Z}\subset X$ and $C$ is bounded in $\Vert \cdot \Vert_{\tilde{Z}\to\mathbb{R}}$ and the set $\{\sqrt{s}R(s+1,A);s>0\}$ is $\mathcal{R}$-bounded in $\mathcal{L}(\tilde{Z},X)$, this holds, by the same example, for $\tilde{Z}=W^{1,p}(0,1)$ since $C$ is bounded in $\mathcal{L}(\tilde{Z},\mathbb{R})$.
		
		Now to show the $\mathcal{R}$-boundedness of $\{\sqrt{s}R(s+1,A_{-1})B;s>0\}$ we follow the same example, we show that the set $\{\sqrt{s}B^*R(s+1,A^*);s>0\}$ is $\mathcal{R}$-bounded. Let first determine $B^*$, we proceed again as in \cite{HaaKun} and multiplying the first equation in (\ref{PDE}) with a fixed $v\in C^\infty([0,1])$ and integrating by parts we obtain:
		$$
		<w'(t,\cdot),v>_{(0,1)} = <w(t,\cdot),v''>_{(0,1)} + Gw(t,\cdot)\delta_1 v,
		$$
		this means that $B^* = \delta_1$. By the same argument used to show that $\{\sqrt{s}CR(s+1,A);s>0\}$ is $\mathcal{R}$-bounded, we show that $\{\sqrt{s}B^*R(s+1,A^*);s>0\}$ is $\mathcal{R}$-bounded.
		
		All hypothesis of the theorem are satisfied, then the operator $A^{cl}$ defined by :
		$$
		A^{cl} g = g'',\qquad D(A^{cl}):=\{g\in W^{2,p}(0,1),g'(0)=0\quad and\quad g'(1)=g(1)-g(0) \}
		$$
		has maximal $L^2$-regularity, thus maximal $L^r$-regularity for every $r>1$ and then problem (\ref{PDE}) has unique solution $u\in W^{1,r}([0,T];X)\cap L^{r}([0,T];D(A^{cl}))$ such that:
		\begin{align*}
		\left[ \int_0^T\big (\int_0^1\Vert \frac{\partial w(t,s)}{\partial t} \Vert^p ds\big )^{r/p}dt\right]^{1/r} +&\left[ \int_0^T\big (\int_0^1\Vert w(t,s) \Vert^p ds\big )^{r/p}dt\right]^{1/r}\cr & + \left[ \int_0^T\big (\int_0^1\Vert\frac {\partial^2 w(t,s)}{\partial s^2} \Vert^p ds\big )^{r/p}dt\right]^{1/r}\cr & \leq C\left[ \int_0^T\big (\int_0^1\Vert f(t,s) \Vert^p ds\big )^{r/p}dt\right]^{1/r}
		\end{align*}
		for some constant $C>0$.
\end{proof}
	\begin{remark} Besides Hilbert spaces there is another interesting situation, where bounded sets of operators are automatically $\mathcal{R}$-bounded. For example if $F$ is a Banach space of type 2 and $E$ is a Banach space of cotype 2, then every bounded set $\tau\subset\calL(E,F)$ is $\mathcal{R}$-bounded (see. \cite{LT}). Thanks to (\ref{LeMerdy}), forl all $\om >\om_0(A)$ the sets $\{s^\frac{1}{2} CR(\om+is,A);s\neq 0\}$ and $\{s^\frac{1}{2} R(\om+is,,A_{-1})B;s\neq 0\}$ are bounded. So, to prove their $\mathcal{R}$-boundedness for the previous example, we distinguish two cases (case $p=2$ is trivial):
	\begin{itemize}
			\item[(i)] If $2< p<3$, then $X=L^p(0,1)$ is of cotype 2, hence the set $\{s^\frac{1}{2}C R(is,A);s\neq 0\}$ is $\mathcal{R}$-bounded since $U=\R$ is of type 2 and it suffices
			 only to show $\mathcal{R}$-boundedness of $\{s^\frac{1}{2} R(\om+is,A_{-1})B;s\neq 0\}$
			\item[(ii)] If $1< p<2$, then $X=L^p(0,1)$ is of type 2, hence the set $\{s^\frac{1}{2} R(\om+is,,A_{-1})B;s\neq 0\}$ is $\mathcal{R}$-bounded since $U=\R$ is of cotype 2 and
			it suffices that only to show that $\{s^\frac{1}{2} CR(is,A);s\neq 0\}$ is $\mathcal{R}$-bounded.
		\end{itemize}
\end{remark}
\subsection{Staffans-Weiss perturbation for maximal regularity in non-reflexive spaces}\label{sec:nonreflexive}
	In this section, we assume that $X$ is a non-reflexive Banach space. The following result shows the maximal regularity of the perturbed boundary value problem \eqref{PM} in the case of $P\equiv 0$.
	
	\begin{theorem}\label{theorem:Staffans_Weiss_Non_Reflexive_Case}
		Let assumptions {\bf(H1)} to {\bf(H3)} be satisfied. Assume additionally that there exists $\lambda_0 \in \mathbb{R}$ such that
		\begin{equation}\label{kappa-0}
		\kappa_0:=\sup_{\lambda>\lambda_0}\Vert \lambda \mathbb{D}_\lambda \Vert < +\infty.
		\end{equation} If $A\in \mathscr{MR}(0,T;X)$ then $\calA\in \mathscr{MR}(0,T;X)$.
	\end{theorem}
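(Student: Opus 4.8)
The plan is to use the characterization of maximal regularity from Remark \ref{remark:Maximal_Regularity_Operator}(i): since Theorem \ref{theorem:staffans-weiss-analytic} already gives that $\calA$ generates an analytic semigroup (because $A$ does), it suffices to show that the operator
\[
(\calR^{cl} f)(t):=\calA\int_0^t \T^{cl}(t-s)f(s)\,ds
\]
extends to a bounded operator on $L^p([0,T],X)$. As in Remark \ref{remark:Maximal_Regularity_Operator}(ii) we may assume $\om_0(A)<0$ and $\om_0(\calA)<0$, so that all the admissibility estimates hold globally on $[0,\infty)$. The starting point is the variation-of-constants formula \eqref{3-calA-f-z} from Proposition \ref{VCF-calA-f}: writing $z=\T^{cl}\ast f$ for the mild solution of \eqref{calA-only}, we have $z(t)\in D(C_\Lambda)$ a.e., the estimate $\|C_\Lambda z(\cdot)\|_{L^p}\le c\|f\|_{L^p}$, and
\[
z(t)=\int_0^t \T(t-s)f(s)\,ds+\int_0^t \T_{-1}(t-s)B\,C_\Lambda z(s)\,ds .
\]
Applying $\calA$ (interpreted via $\calA=A_{-1}+BC_\Lambda$ on the first integral, and using $A_{-1}$ on the Duhamel term) and rearranging, one expects a representation of the form
\[
\calA z(t)=A\int_0^t \T(t-s)f(s)\,ds+(\text{terms involving }B,\ C_\Lambda z,\ \text{and }\D_\la).
\]

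The key step is to bound the extra terms. Since $A\in\mathscr{MR}(0,T;X)$, the map $f\mapsto A\,(\T\ast f)=\calR f$ is bounded on $L^p$; so the whole argument reduces to estimating the correction terms in $L^p([0,T],X)$. Here is where the hypothesis \eqref{kappa-0}, $\sup_{\la>\la_0}\|\la\D_\la\|=\kappa_0<\infty$, enters: the term $\T_{-1}(t-s)B\,C_\Lambda z(s)$ can be rewritten using $B=(\la-A_{-1})\D_\la$ as
\[
\T_{-1}(t-s)(\la-A_{-1})\D_\la\,C_\Lambda z(s)=\la\,\T(t-s)\D_\la C_\Lambda z(s)-A\,\T(t-s)\D_\la C_\Lambda z(s)+\cdots,
\]
so that, roughly, $\calA(\T\ast (BC_\Lambda z))$ is controlled by $\calA(\T\ast(\la\D_\la C_\Lambda z))$ plus lower-order pieces; then the operator norm bound $\|\la\D_\la\|\le\kappa_0$ together with the maximal regularity of $A$ and the admissibility estimate $\|C_\Lambda z\|_{L^p}\le c\|f\|_{L^p}$ should close the loop, giving $\|\calA z\|_{L^p([0,T],X)}\le C\|f\|_{L^p([0,T],X)}$. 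I would organize this as: (1) reduce to $\om_0<0$ and recall the $\calR$-operator characterization; (2) derive the Duhamel identity for $\calA z$ from Proposition \ref{VCF-calA-f}; (3) use $B=(\la-A_{-1})\D_\la$ and the identity $\la\,\T(t-s)\D_\la=\T(t-s)B+A\T(t-s)\D_\la$ (or its resolvent form) to split off the "bad" $A_{-1}$-part; (4) absorb $\la\D_\la$ by its uniform bound $\kappa_0$ and invoke $A\in\mathscr{MR}$ on the remaining convolution; (5) conclude by Remark \ref{remark:Maximal_Regularity_Operator}(i). Finally, $\mathscr{MR}$ for one $T$ (and one $p$) upgrades automatically to all $T'>0$ and all $q\in(1,\infty)$ by the Dore result quoted after Definition 4.2.

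The main obstacle I anticipate is making the formal manipulation "$\calA\,(\T_{-1}\ast B g)$" rigorous: $B$ maps into $X_{-1}$, $C_\Lambda z$ lies only in $X$ (not in $D(A)$), and $\calA$ is being applied inside a convolution, so all identities must be justified either by first working on the dense class $f\in\calC([0,T],D(\calA))$ (where everything is classical) and then passing to the limit using the $L^p$-estimates, or by using the Yosida-extension machinery of Proposition \ref{VCF-calA-f}. A related technical point is that the natural integration-by-parts / resolvent identity introduces a fixed $\la>\la_0$, and one must check that the resulting $\la$-dependent constants stay bounded — this is exactly what $\kappa_0<\infty$ buys. Once the correction term is shown to factor through $g\mapsto \D_\la g$ (bounded from $U$ into $X$ with the right decay) composed with $C_\Lambda z$ (bounded from $f$ into $L^p(U)$) and the maximal-regularity operator of $A$, the estimate is immediate; so the whole difficulty is concentrated in step (3)–(4), the rigorous decomposition of the Desch–Schappacher-type Duhamel term.
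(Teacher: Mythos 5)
Your high-level plan coincides with the paper's: invoke Theorem \ref{theorem:staffans-weiss-analytic} for analyticity, reduce to boundedness of $\mathscr{R}^{cl}f=\calA(\T^{cl}\ast f)$ on $L^p$, start from the Duhamel identity of Proposition \ref{VCF-calA-f}, and combine the maximal regularity of $A$ with the estimate $\|C_\Lambda z\|_{L^p}\le c\|f\|_{L^p}$. But the decisive step --- making sense of ``applying $\calA$'' to $z(t)=(\T\ast f)(t)+\int_0^t\T_{-1}(t-s)BC_\Lambda z(s)\,ds$ --- is exactly where your sketch does not close, and your proposed fix (a fixed-$\la$ splitting via $B=(\la-A_{-1})\D_\la$) does not work as stated. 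Neither $(\T\ast f)(t)$ nor the Duhamel term lies in $D(\calA)$ separately, so you cannot distribute $\calA$ over the two summands; and if you instead write $\calA z=A_{-1}z+BC_\Lambda z$ and substitute $w:=\T_{-1}\ast(BC_\Lambda z)=\la(\T\ast\D_\la C_\Lambda z)-A(\T\ast\D_\la C_\Lambda z)$, the piece $A_{-1}w$ produces a term of the form $A_{-1}A(\T\ast\D_\la C_\Lambda z)$, a ``second application of $A$'' that is not controlled by $A\in\mathscr{MR}$ and does not cancel in any evident way. A telling symptom is that with a \emph{fixed} $\la$ your argument would only ever use boundedness of the single operator $\D_\la\in\calL(U,X)$, which always holds; hypothesis \eqref{kappa-0} would then be vacuous, whereas the theorem genuinely needs the \emph{uniformity} of $\|\la\D_\la\|$ as $\la\to\infty$.

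The paper's device for both problems is the Yosida approximation: it applies the \emph{bounded} operators $\calA_n=n\calA R(n,\calA)$ to $z(t)$, so no illegitimate distribution of an unbounded operator occurs, and uses the resolvent identity \eqref{Resolvent-calA} to write $\calA_n=nAR(n,A)+n^2\D_n(I-C_\Lambda\D_n)^{-1}CR(n,A)$. Substituting the Duhamel formula then yields four terms, each estimated \emph{uniformly in $n$}: the first by $A\in\mathscr{MR}$; the second and fourth by the admissibility of $C$ (Lemma \ref{Hadd_lemma}) together with $\|n\D_n\|\le\kappa_0$ and the uniform bound \eqref{transfer-estim} on $(I-C_\Lambda\D_n)^{-1}$ coming from {\bf(H3)}; the third by $A\in\mathscr{MR}$ applied to $n\D_nC_\Lambda z$ together with $\kappa_0$ and Proposition \ref{VCF-calA-f}. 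One then lets $n\to\infty$ and concludes by Fatou's lemma. This is where $\la=n\to\infty$ is forced and where \eqref{kappa-0} and \eqref{transfer-estim} earn their keep. Your proposal identifies the right ingredients but is missing this approximation mechanism, and without it (or a rigorous substitute) the central estimate $\|\calA z\|_{L^p}\le C\|f\|_{L^p}$ is not established.
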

	
	\begin{proof} As $A\in \mathscr{MR}(0,T;X)$, there exists $\mathscr{R}\in \calL(L^p([0,T],X))$ such that
		\begin{equation*}
		(\mathscr{R}f)(t)=A\int^t_0 \T(t-s)f(s)ds,
		\end{equation*}
		for all $f\in L^p([0,T],X)$ and a.e $t\ge 0$. By Theorem \ref{theorem:staffans-weiss-analytic}, $\calA$ generates an analytic semigroup $\T^{cl}$ on $X$. We then can define the following operator
		$$
		(\mathscr{R}^{cl}f)(t):=\calA\int_0^t \T^{cl}(t-s)f(s)ds \qquad \qquad f\in C([0,T];D(\calA)).
		$$
		Our objective is to show that  the operator $\mathscr{R}^{cl}$ admits a bounded extension on $L^p([0,T];X)$. Let us consider the Yosida approximation operators of $\calA$ by
		\begin{equation*}
		\calA_n := n\calA R(n,\calA)= n^2 R(n,\calA) -nI,
		\end{equation*}
		for any $n\in\N$ such that $n>\om_0(\calA)$.\\ From  \eqref{Resolvent-calA} and for any sufficiently large integer $n$,
		one can write
		\begin{equation}\label{calAn}
		\calA_n = nAR(n,A) +n^2 \mathbb{D}_n(I-C_\wedge \mathbb{D}_n)^{-1}CR(n,A)\quad .
		\end{equation}
		Consider the following operators
		$$
		(\mathscr{R}^{cl}_nf)(t):=\calA_n\int_0^t \T^{cl}(t-s)f(s)ds,
		$$
		for $f\in C([0,T];D(\calA))$ and $n\in\N$ such that $n>\om_0(\calA)$. \\We have (see \cite{EngNag}),
		\begin{equation*}
		(\mathscr{R}^{cl}f)(t)=\lim_{n\to\infty} (\mathscr{R}^{cl}_nf)(t),
		\end{equation*}
		for every $t\in [0,T]$. \\ Using Proposition \ref{VCF-calA-f}, we obtain
		\begin{equation*}
		(\mathscr{R}^{cl}_nf)(t)=\calA_n \left(\int^t_0 \T(t-s)f(s)ds+\int^t_0 \T_{-1}(t-s)C_\Lambda z(s)ds\right),
		\end{equation*}
		where
		\begin{equation*}
		z(t)=\int^t_0 \T^{cl}(t-s)f(s)ds,\qquad t\ge 0.
		\end{equation*}
		Using  \eqref{calAn}, for large $n$,
		\begin{align*}
		(\mathscr{R}^{cl}_n f)(t) &=  nAR(n,A)\int_0^t \T(t-s)f(s)ds\\
		&+n^2 \mathbb{D}_n(I-C_\Lambda \mathbb{D}_n)^{-1}CR(n,A)\int_0^t \T(t-s)f(s)ds  \\
		&+nAR(n,A)\int_0^t \T_{-1}(t-s)BC_\Lambda z(s)ds\\
		&+n^2 \mathbb{D}_n(I-C_\Lambda \mathbb{D}_n)^{-1}CR(n,A)\int_0^t \T_{-1}(t-s)BC_\Lambda z(s)ds\\
		&:=I_n^1(t) + I_n^2(t) + I_n^3(t) + I_n^4(t).
		\end{align*}
		Let us estimate separatly the four terms on the left of the above equality. Maximal regularity of $A$ yields
		\begin{align*}
		\int_0^T \Vert I_n^1(t) \Vert^p dt &=\left\|\mathscr{R}( nR(n,A)f ) \right\|_{L^p([0,T],X)}^p\\
		&\leq c_T \left\| f \right\|_{(L^p[0,T],X)}^p,
		\end{align*}
		for a constant $c_T>0$ independent of $f$. \\On the other hand, combining  \eqref{transfer-estim}, \eqref{kappa-0} and  Lemma \ref{Hadd_lemma} we obtain
		\begin{align*}
		\int_0^T \|I_n^2(t) \|^p dt &=\int_0^T \left\| n \mathbb{D}_n(I-C_\wedge \mathbb{D}_n)^{-1}CnR(n,A)\int_0^t \T(t-s)f(s)ds \right\|^p dt\\ & \le (\kappa \nu)^p \int_0^T \left\|  C\int_0^t \T(t-s)nR(n,A)f(s)ds \right\|^p dt\\
		&\leq (\kappa \nu \ga_T)^p \|f\|_{L^p}^p:=c_1(T,p)  \|f\|_{L^p}^p,
		\end{align*}
		where $c_1(T,p)$ is a constant depending on $T$ and $p$.
		\\Now we estimate $I^3_n(t)$
		\begin{align*}
		\int_0^T \Vert I_n^3(t) \Vert^p dt &=\int_0^T \left\Vert A\int_0^t \T(t-s)n\D_n C_\Lambda z(s)ds \right\Vert^p dt\\
		&=\left\|\mathscr{R}(n\D_n C_\Lambda z(\cdot)) \right\|_{L^p([0,T],X)}^p\\ & \le c_T \kappa_0^p \left\| C_\Lambda z(\cdot) \right\|_{L^p([0,T],U)}^p \\
		&\leq c_2(T,p)   \Vert f \Vert_{L^p}^p,
		\end{align*}
		due to \eqref{kappa-0} and Proposition \ref{VCF-calA-f}, where $c_2(T,p)>0$ is a constant depending only on $T$ and $p$.
		\\ Similarly, we have
		\begin{align*}
		\int_0^T \Vert I_n^4(t) \Vert^p dt &=\int_0^T \left\Vert n \mathbb{D}_n(I-C_\Lambda \mathbb{D}_n)^{-1}C\int_0^t \T(t-s)nR(n,A)BC_\Lambda z(s)ds \right\Vert^p dt\\ &\le (\nu \kappa_0)^p\int_0^T \left\Vert  C\int_0^t \T(t-s)n \mathbb{D}_nC_\Lambda z(s)ds \right\Vert^p dt\\
		&\leq c_3(T,p) \Vert f \Vert_{L^p}^p,
		\end{align*}
		by \eqref{transfer-estim}, \eqref{kappa-0}, Lemma \ref{Hadd_lemma} and Proposition \ref{VCF-calA-f}, where $c_3(T,p)>0$ is a constant depending only on $T$ and $p$. Finally one can conclude that
		\begin{equation*}
		\int_0^T \Vert (\mathscr{R}_n^{cl} f)(t) \Vert^p dt \leq \tilde{C_p} \Vert f \Vert_{L^p}^p,
		\end{equation*}
		for some constant $\tilde{C(T,p)}>0$ depending only on $T$ and $p$.\\
		The fact that $\Vert (\mathscr{R}_n^{cl} f)(t) \Vert^p \rightarrow \Vert (\mathscr{R}^{cl} f)(t) \Vert^p$ for all $t\in [0,T]$, Fatou's Lemma implies
		\begin{align*}
		\int_0^T \Vert (\mathscr{R}^{cl} f)(t) \Vert^p dt &\leq \underset{n\to\infty}{\liminf} \int_0^T \Vert (\mathscr{R}_n^{cl} f)(t) \Vert^p dt\\
		&\leq \tilde{C_p} \Vert f \Vert_{L^p}^p.
		\end{align*}
		Thus $\mathscr{R}^{cl}$ can be extended to a bounded operator on $L^p([0,T];X)$.
	\end{proof}
\begin{remark} In the case of Desch-Schappacher perturbation (bounded perturbation at the boundary) we have obtained in \cite[Theorem ]{AmBoDrHa} the maximal regularity without appealing to the condition (\ref{kappa-0}). However for Staffans-Weiss perturbation (unbounded perturbation at the boundary), our proof of maximal regularity is essentially based on this condition. We mention that (\ref{kappa-0}) cannot take place if we work in a reflexive Banach space. In fact, this estimation can be expressed as   $\displaystyle{\sup_{\lambda>\lambda_0}}\Vert \lambda R(\lambda,A_{-1})B \Vert < +\infty$ which is equivalent to $Range(B) \subset F_1^{A_{-1}}$ (see \cite[Remark 10]{MarBouFadHam}, \cite{BMH}). It is important to recall that the control operator $B$ is strictly unbounded, i.e. $Range(B)\cap X = \{0\}$, since it comes from the boundary (see \cite{Sala}). So, if $X$ is reflexive then $F_1^{A_{-1}}=X$ and consequently $Range(B)\subset X$ which is contradictory since $B$ is not identically nul. Thus all these facts force us to work with a non-reflexive Banach space
\end{remark}
Now we state the previous theorem in the case of $X$ being a  $GT$-space (e.g. if $X=L^1$ or $X=C(K)$), which is a non-reflexive space. Since we are not concerned in this paper with $GT$-spaces and $H^\infty$-calculus, we omit definitions: all details can be found in \cite{KalWei}.
\begin{corollary}
Let assumptions of Theorem \ref{theorem:Staffans_Weiss_Non_Reflexive_Case} be satisfied. Let $X$ or $X^*$ be a $GT$-space. If $A$ is an $H^\infty$-sectorial operator on $X$ with $\omega_H(A)<\frac{\pi}{2}$ then $\calA \in \mathscr{MR}(0,T;X)$.
\end{corollary}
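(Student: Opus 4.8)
The plan is to reduce the corollary to Theorem \ref{theorem:Staffans_Weiss_Non_Reflexive_Case}. That theorem already contains, among its hypotheses, assumptions {\bf(H1)}--{\bf(H3)} and the boundedness condition \eqref{kappa-0}, all of which are taken over verbatim in the statement of the corollary; its only remaining extra hypothesis is that $A\in\mathscr{MR}(0,T;X)$. So the entire task is to show that, on a Banach space $X$ with $X$ or $X^*$ a $GT$-space, an operator $A$ that is $H^\infty$-sectorial with $\omega_H(A)<\frac{\pi}{2}$ satisfies $A\in\mathscr{MR}(0,T;X)$; once this is established, Theorem \ref{theorem:Staffans_Weiss_Non_Reflexive_Case} applies directly and yields $\calA\in\mathscr{MR}(0,T;X)$, which is the assertion.

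For the reduction itself I would invoke the Kalton--Weis theory of the $H^\infty$-calculus (the relevant statement is recorded in \cite{KalWei}): on a $GT$-space, or on a space whose dual is a $GT$-space, a sectorial operator with bounded $H^\infty$-calculus of angle strictly less than $\frac{\pi}{2}$ generates a bounded analytic semigroup that has maximal $L^p$-regularity. Heuristically, the $GT$ property (Grothendieck's theorem) forces the operator families produced by the functional calculus to obey the summability/square-function estimates that characterize maximal regularity, so that no separate $\mathcal{R}$-boundedness hypothesis --- unavailable here since $X$ is non-reflexive --- is needed. As this is a cited black box, I would not reproduce its proof.

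The only point needing a word of care is the bookkeeping of conventions: in this paper $A$ itself (not $-A$) generates the semigroup, and by Remark \ref{remark:Maximal_Regularity_Operator}(ii) one may assume $\omega_0(A)<0$; the statement ``$A$ is $H^\infty$-sectorial with $\omega_H(A)<\frac{\pi}{2}$'' is to be read accordingly, so that the sector of the calculus and the half-plane estimates line up with those in Theorem \ref{theorem:Staffans_Weiss_Non_Reflexive_Case}. I do not anticipate a genuine obstacle: the mathematical content is entirely in the $GT$-space maximal-regularity theorem, and the remaining verification that the hypotheses of Theorem \ref{theorem:Staffans_Weiss_Non_Reflexive_Case} are met is immediate from the way the corollary is phrased.
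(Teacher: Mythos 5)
Your proposal is correct and matches the paper's own proof: the paper likewise invokes the Kalton--Weis result (Theorem 7.5 in \cite{KalWei}) to conclude that $A$ has maximal $L^p$-regularity under the $GT$-space and $H^\infty$-calculus hypotheses, and then applies Theorem \ref{theorem:Staffans_Weiss_Non_Reflexive_Case} directly.
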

\begin{proof}
		According the Theorem 7.5 in \cite{KalWei}, if $A$ is an $H^\infty$-sectorial operator on $X$ with $\omega_H(A)<\frac{\pi}{2}$ then $A$ has maximal $L^p$-regularity for all $1<p<\infty$, which implies by Theorem \ref{theorem:Staffans_Weiss_Non_Reflexive_Case} that $A^{cl} \in \mathscr{MR}(0,T;X)$.
	\end{proof}
We end this section with the following result given the maximal $L^p$-regularity for the evolution equation \eqref{PM} (or equivalently \eqref{calA-max}).
\begin{corollary}\label{SF-big-equa-result}
	Let conditions {\bf(H1)} to { \bf (H3)} be satisfied on Banach spaces $X,U$ and $A\in \mathscr{MR}(0,T;X)$. Moreover, we assume that $(A,B,\P)$ generates a regular linear system on $X,U,X$ with $\P\in\calL(D(A),X)$ is the restriction of $P:Z\to X$ on $D(A)$. Then the operator $\calA+P\in \mathscr{MR}(0,T;X)$ (or equivalently the evolution equation \eqref{PM} has maximal $L^p$-regularity) if one of the following conditions hold:
	\begin{itemize}
		\item [{\rm (i)}]$X$ is a non reflexive Banach space and there exists $\la_0\in\R$ such that
		\begin{equation*}
		\sup_{\lambda>\lambda_0}\Vert \lambda \mathbb{D}_\lambda \Vert < +\infty.
		\end{equation*}
		\item [{\rm (ii)}] $X$ and $U$ are UMD spaces and the sets $\{s^\frac{1}{p} R(\om+is,A_{-1})B;s\neq 0\}$ and $\{s^\frac{1}{q} C R(\om +is,A);s\neq 0\}$ are $\mathcal{R}$-bounded for $\om >\om_1$.
	\end{itemize}
\end{corollary}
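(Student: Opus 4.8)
The plan is to regard $P$ as a Miyadera--Voigt perturbation of $\calA$ and to transport maximal regularity from $\calA$ to $\calA+P$. First I would collect the available facts: by Theorem~\ref{theorem:generalisation_of_Hadd_Manzo_Rhandi}(i), $P\in\calL(D(\calA),X)$ is a $p$-admissible observation operator for $\calA$ and $(\calA+P,D(\calA))$ generates a $C_0$-semigroup $\T^{cl,P}$; moreover $(\calA,I_X,P)$ generates a regular linear system whose closed loop $(\calA+P,I_X,P)$ is again regular (Theorem~\ref{staff_wei}), so that $P$ is also a $p$-admissible observation operator for $\calA+P$, with Yosida extension $\P_{\Lambda,\calA+P}$ relative to $\calA+P$. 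Under hypothesis~(i), Theorem~\ref{theorem:Staffans_Weiss_Non_Reflexive_Case} gives $\calA\in\mathscr{MR}(0,T;X)$; under hypothesis~(ii), after normalizing $\omega_0(A)<0$ as in Remark~\ref{remark:Maximal_Regularity_Operator}(ii) so that $A$ generates a bounded analytic semigroup, Theorem~\ref{theorem:staffans-weiss-R-boundedness} gives $\calA\in\mathscr{MR}(0,T;X)$. Thus in both cases only the passage from $\calA$ to $\calA+P$ remains.

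Fix $f\in L^p([0,T],X)$ and let $z(t):=\int_0^t\T^{cl,P}(t-s)f(s)\,ds$ be the mild solution of \eqref{calA-max} with $x=0$. Applying Lemma~\ref{Hadd_lemma} to the semigroup $\T^{cl,P}$ and the admissible observation operator $P$ yields $z(t)\in D(\P_{\Lambda,\calA+P})$ a.e.\ and $\|\P_{\Lambda,\calA+P}z\|_{L^p([0,T],X)}\le c\,\|f\|_{L^p([0,T],X)}$; comparing the Yosida extensions of $P$ relative to $\calA$ and to $\calA+P$ exactly as in the proof of Proposition~\ref{VCF-calA-f} gives $\P_{\Lambda,\calA+P}z=\P_\Lambda z$ a.e., whence $g:=\P_\Lambda z+f\in L^p([0,T],X)$. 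By Theorem~\ref{theorem:generalisation_of_Hadd_Manzo_Rhandi}(ii) (with $x=0$), the very same $z$ satisfies $z(t)=\int_0^t\T^{cl}(t-s)g(s)\,ds$, i.e.\ $z$ is the mild solution of $\dot z=\calA z+g$, $z(0)=0$; since $\calA\in\mathscr{MR}(0,T;X)$, this mild solution belongs to $W^{1,p}([0,T],X)\cap L^p([0,T],D(\calA))$ and $\|\dot z\|_{L^p}+\|z\|_{L^p}+\|\calA z\|_{L^p}\le C\|g\|_{L^p}\le C'\|f\|_{L^p}$. As $z(s)\in D(\calA)\subset Z$ a.e.\ and $\P_\Lambda$ coincides with $P$ on $Z$ by \eqref{relation-K-C-Lambda}, we get $\P_\Lambda z(s)=Pz(s)$ a.e., hence $\dot z(s)=\calA z(s)+Pz(s)+f(s)=(\calA+P)z(s)+f(s)$ a.e.; since $D(\calA+P)=D(\calA)$, the function $z$ belongs to $W^{1,p}([0,T],X)\cap L^p([0,T],D(\calA+P))$ and solves \eqref{ACP} with $\G=\calA+P$, with $\|(\calA+P)z\|_{L^p}\le\|\calA z\|_{L^p}+\|\P_\Lambda z\|_{L^p}\le C''\|f\|_{L^p}$. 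Uniqueness requires no extra input: any solution of this Cauchy problem lying in $W^{1,p}\cap L^p(D(\calA+P))$ is in particular the mild solution, hence equals $z$. Therefore $\calA+P\in\mathscr{MR}(0,T;X)$, i.e.\ \eqref{PM} has maximal $L^p$-regularity.

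I expect the only genuinely delicate point to be the $L^p$-estimate for $\P_\Lambda z$: applying Lemma~\ref{Hadd_lemma} to the $\T^{cl}$-representation of $z$ directly would only give the circular inequality $\|\P_\Lambda z\|_{L^p}\le c(\|\P_\Lambda z\|_{L^p}+\|f\|_{L^p})$, so one must either use admissibility of $P$ with respect to the already-perturbed generator $\calA+P$ (as above) or else run that estimate on a short interval $[0,\alpha_0]$ where the constant $c(\alpha_0)$ is $<1$ and then patch over $[0,T]$. A secondary step that needs some care, but is entirely routine, is identifying the various Yosida extensions of $P$ (relative to $A$, to $\calA$, and to $\calA+P$) on the domains where each is used, which proceeds by the Laplace-transform argument already carried out in the proof of Proposition~\ref{VCF-calA-f}. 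For completeness one may also observe that $\calA+P$ generates an analytic semigroup---$\calA$ does by Theorem~\ref{theorem:staffans-weiss-analytic}, and the Miyadera--Voigt resolvent identity $R(\lambda,\calA+P)=R(\lambda,\calA)(I-PR(\lambda,\calA))^{-1}$ together with $\sup_{\mu\in\Sigma_\gamma}|\mu|^{1/q}\|PR(\mu,\calA)\|<\infty$ (from $p$-admissibility of $P$ for $\calA$, cf.\ \cite[Lemma 2.3]{LeMerdy}) and boundedness of $(I-PR(\lambda,\calA))^{-1}$ on a right half-plane yields $\|R(\lambda,\calA+P)\|\le\tilde M/|\lambda-\omega|$ there---but this is consistent with, rather than needed for, the argument.
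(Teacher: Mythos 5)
Your proposal is correct, and its skeleton coincides with the paper's: you first invoke Theorem \ref{theorem:generalisation_of_Hadd_Manzo_Rhandi} to get that $P$ is a $p$-admissible observation operator for $\calA$, and then Theorem \ref{theorem:Staffans_Weiss_Non_Reflexive_Case} (case (i)) or Theorem \ref{theorem:staffans-weiss-R-boundedness} (case (ii)) to get $\calA\in\mathscr{MR}(0,T;X)$, exactly as the paper does. The divergence is in the last step: the paper simply cites \cite[Theorem 3]{AmBoDrHa} for the transfer of maximal regularity from $\calA$ to the Miyadera--Voigt perturbation $\calA+P$, whereas you prove that step from scratch by a bootstrap --- bound $\P_\Lambda z$ in $L^p$ via Lemma \ref{Hadd_lemma} applied to the semigroup generated by $\calA+P$ (thereby correctly sidestepping the circularity you flag), rewrite $z$ as the mild solution of $\dot z=\calA z+g$ with $g=\P_\Lambda z+f\in L^p$, and apply $\calA\in\mathscr{MR}(0,T;X)$. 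This buys a self-contained proof at the cost of redoing what the cited external theorem already packages. Two small remarks: the identification $\P_\Lambda z(s)=Pz(s)$ needs only that a Yosida extension agrees with the original operator on the generator's domain (here $z(s)\in D(\calA)$ a.e.\ once maximal regularity of $\calA$ is applied), so the appeal to \eqref{relation-K-C-Lambda} is unnecessary; and in case (ii) you are right to normalize via Remark \ref{remark:Maximal_Regularity_Operator}(ii) so that the bounded-analyticity hypothesis of Theorem \ref{theorem:staffans-weiss-R-boundedness} is met --- a point the paper glosses over.
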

\begin{proof}
	First of all the operator $P$ is a $p$-admissible observation operator for $\calA,$ due to Theorem \ref{theorem:generalisation_of_Hadd_Manzo_Rhandi}. Thanks to Theorem \ref{theorem:Staffans_Weiss_Non_Reflexive_Case} (resp. Theorem \ref{theorem:staffans-weiss-R-boundedness} ), the assertion (i) (resp. (ii) ) implies that $\calA$ has maximal regularity. Finally, appealing to \cite[Theorem 3]{AmBoDrHa} we obtain $\calA+P\in \mathscr{MR}(0,T;X)$ which completes the proof.
\end{proof}
\section{Applications to non-autonomous problem}\label{sec:5}
	The object of this section is to apply our obtained results to prove maximal regularity of some classes of non-autonomous problem. First, we present the definition of maximal $L^p$-regularity in the non-autonomous case.
	
	Let $X$ be a Banach space and $\{A(t), t\in [0,T]\}$ a family of closed linear operators on $X$. For a measurable function $f:[0,T]\to X$, we consider the non-autonomous Cauchy problem:
	\begin{equation}
	\label{NCP}
	\tag{NCP}
	\begin{cases}
	\dot{x}(t) = A(t)x(t) + f(t) , & \mbox{ } t\in [0,T] \\
	x(0)=0. & \mbox{}
	\
	\end{cases}
	\end{equation}
	\begin{definition}
		For $p\in (1,\infty)$, we say that $\{A(t), t\in [0,T]\}$ or \eqref{NCP} has $L^p$-\textit{maximal regularity} if for all $f\in L^p([0,T],X)$, there exists a unique solution of \eqref{NCP} $x\in W^{1,p}([0,T],X)$ such that $t\mapsto A(t)x(t) \in L^p([0,T],X)$.
	\end{definition}
	 Maximal $L^p$-regularity in the non-autonomous case is quite different and less understood. However, several results have been established. We review some of them separating between the case where $D(A(t))$ does not depend on $t$ and the case where $D(A(t))$ is varying with respect to $t$. In the first case, that of $D(A(t))=D(A(0))$, Pr\"uss and Schnaubelt \cite{PruSchn} and Amann \cite{AmannMax} proved that \eqref{NCP} has $L^p$-maximal regularity under the conditions that $t\mapsto A(t)$ is continuous and $A(t)$ has $L^p$-maximal regularity for every $t\in [0,T]$. Arendt et al. \cite{ArRaFoPo} generalized this result to relative continuous function $t\mapsto A(t)$. When the operators $A(t)$ have time-dependent domains and $X$ is Hilbert space, Hieber and Monniaux \cite{HieMon} showed that \eqref{NCP} has $L^p$-maximal regularity whenever the family $\{A(t), t\in [0,T]\}$ satisfies the so-called Acquistapace Terreni conditions and every $A(t)$ has $L^p$-maximal regularity. This result was extended by Portal and \v{S}trkalj \cite{PortStrk} to a UMD spaces by using the concept of $\calR$-boundedness.\\
	 \\
	 The following theorem, due to Pr\"uss and Schnaubelt \cite{PruSchn}, provides a characterization of maximal regularity by means of continuity.
\begin{theorem}\label{theorem:Pruss-Schnaubelt}
	 	Let $A(t)$, $t\in [0,T]$, generates an analytic semigroup of negative type on $X$, $D(A(t)) = D(A(0))=:X_1$, and $A(\cdot) \in C([0,T],\calL(X_1,X))$, where $X_1$ is endowed with the graph norm of $A(0)$. Assume additionally that for every $t\in [0,T]$,  $A(t)$ has maximal $L^p$-regularity. Then $\{A(t), t\in [0,T]\}$ has maximal $L^p$-regularity on $X$.
	 \end{theorem}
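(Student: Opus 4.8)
The plan is to prove the theorem by a freezing-of-coefficients argument: on short time intervals one reduces to the autonomous case plus a small bounded perturbation, and one then concatenates the resulting local solutions. \emph{First I would establish a perturbation lemma.} Suppose $A_0$ generates an analytic semigroup of negative type on $X$ with $D(A_0)=X_1$ and $A_0\in\mathscr{MR}(0,T;X)$, and let $B(\cdot)\in L^\infty([0,\tau];\calL(X_1,X))$ with $\sup_t\|B(t)\|_{\calL(X_1,X)}$ small. Then $\dot{x}=A_0x+B(t)x+f$, $x(0)=x_0$, should be uniquely solvable in $W^{1,p}([0,\tau];X)\cap L^p([0,\tau];X_1)$ for all $f\in L^p$ and all $x_0$ in the real interpolation (trace) space $(X,X_1)_{1-1/p,p}$. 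To get this I would note that the autonomous solution operator $f\mapsto x$ is bounded from $L^p([0,\tau];X)$ into $L^p([0,\tau];X_1)$ with norm independent of $\tau\in(0,T]$ — this uses negative type, which upgrades $\mathscr{MR}(0,T;X)$ to maximal regularity on $\R^+$ with a fixed constant, whence the estimate on $[0,\tau]$ by extension by zero — so $x\mapsto B(\cdot)x$ is a bounded operator of small norm from $L^p([0,\tau];X_1)$ into $L^p([0,\tau];X)$, and a Neumann series / contraction argument closes the fixed point $x=S(B(\cdot)x+f)$.

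\emph{Next I would localize and freeze.} Since $A(\cdot)\in C([0,T];\calL(X_1,X))$ is uniformly continuous, there is $\delta>0$ with $\|A(t)-A(s)\|_{\calL(X_1,X)}$ below the smallness threshold of the lemma whenever $|t-s|\le\delta$. Partition $[0,T]$ into intervals $I_j=[\tau_{j-1},\tau_j]$ of length $\le\delta$ and pick $\bar t_j\in I_j$. On $I_j$ I write $A(t)=A(\bar t_j)+(A(t)-A(\bar t_j))$: the leading operator $A(\bar t_j)$ is autonomous, of negative type, and in $\mathscr{MR}(0,T;X)$ by hypothesis, while $A(t)-A(\bar t_j)$ is an $\calL(X_1,X)$-valued perturbation of small norm on $I_j$. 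The perturbation lemma then yields maximal $L^p$-regularity on $I_j$, with inhomogeneous initial data allowed. Finally I concatenate: solve \eqref{NCP} on $I_1$ with $x(0)=0$; the maximal-regularity solution has $x(\tau_1)\in(X,X_1)_{1-1/p,p}$, which is an admissible initial value on $I_2$; solving there and iterating over $j$, the pieces paste into a single $x\in W^{1,p}([0,T];X)$ with $A(\cdot)x(\cdot)\in L^p([0,T];X)$ solving \eqref{NCP}, and uniqueness on $[0,T]$ follows from uniqueness on each $I_j$.

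\emph{The hard part is twofold.} First, the perturbation lemma requires the maximal-regularity constant of the autonomous problems to be controlled uniformly as the interval length shrinks; this is what forces the detour through maximal regularity on $\R^+$ and makes essential use of the negative-type hypothesis. Second, the concatenation demands precise bookkeeping of trace spaces: one must know that the endpoint value of a maximal-regularity solution on $I_j$ lands in $(X,X_1)_{1-1/p,p}$ and that maximal regularity holds with such an initial datum on $I_{j+1}$, which is exactly why the perturbation lemma must be proved in the inhomogeneous-initial-data form rather than only for $x_0=0$.
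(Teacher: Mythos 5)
The paper does not prove Theorem \ref{theorem:Pruss-Schnaubelt} at all: it is quoted as a known result of Pr\"uss and Schnaubelt \cite{PruSchn}, and your outline (negative type upgrading $\mathscr{MR}(0,T;X)$ to maximal regularity on $\R^+$ with interval-independent constants, a small-perturbation lemma in $\calL(X_1,X)$ allowing initial data in the trace space $(X,X_1)_{1-1/p,p}$, freezing of coefficients on a fine partition, and concatenation) is precisely the argument of that reference and is correct. The one step to tidy is that the ``smallness threshold'' in your perturbation lemma depends on the maximal-regularity constant of the frozen operator $A(\bar t_j)$, so before invoking uniform continuity to pick a single $\delta$ you should either show these constants are uniformly bounded over $t\in[0,T]$ or, more simply, cover $[0,T]$ by finitely many neighborhoods $U_s$ on which $\Vert A(t)-A(s)\Vert_{\calL(X_1,X)}$ is below the threshold attached to $A(s)$ and take a partition subordinate to that cover.
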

In the following, let $X$, $Z$ and $U$ be Banach spaces such that $Z\subset X$ with dense and continuous embedding. We consider the non-autonomous problem
 \begin{align*}
  \begin{cases} \dot{z}(t)=A_m(t) z(t)+f(t),& t\in [0,T],\cr z(0)=0,\cr Gz(t)=Kz(t),& t\in [0,T],\end{cases}\end{align*}
 where $A_m(t):Z\subset X\to X$, $\in [0,T]$, are linear closed operators in a Banach space $X$ with domain $D(A_m(t)):=Z$, $G,K:Z\to U$ are linear boundary operators and $f\in L^p(\R^+,X)$ with $p\ge 1$ is a real number. Define the operator
 \begin{equation*}
 	\calA(t):= A_m(t) \text{ with } D(\calA(t)):=\{x\in Z; Gx = Kx \}=:\calD.
 \end{equation*}
  We define the non-autonomous \textit{Dirichlet map}
 $$
 \D_\la(t) := (G_{|Ker(\la-A_m(t))})^{-1}\in \calL(U,X), \qquad \la\in\rho(A(t)),
 $$
 and the $\lambda$-independing operators
 \begin{equation}\label{B(t) operator}
 B(t):= (\lambda - A_{-1}(t))\mathbb{D}_\la(t) \in \mathcal{L}(U,X_{-1}),
 \end{equation}
 and
 \begin{equation*}
 	C:= Ki,
 \end{equation*}
 where $i:D\to Z$ is the canonical injection form $D$ to $Z$.

The following proposition asserts that continuity is invariant to cross boundary perturbation.
\begin{proposition}\label{theorem:continuity perturbation}
	Let assumptions ($\bf {H1}$) to ($\bf {H3}$) be satisfied for every $t\in [0, T]$ by the operators $A(t), B(t)$ and $C$. Assume addionally ($\bf H4$): There is $\mu_0 >\om_0(A(0))$ such that $Ker(\mu_0 - A_m(t))$ does not depend on $t$. If $A(\cdot) \in C([0,T],\calL(D,X))$ then $\calA(\cdot) \in C([0,T],\calL(\calD,X))$.
\end{proposition}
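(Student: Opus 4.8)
The plan is to reduce the continuity of $\calA(\cdot)$ to the assumed continuity of $A(\cdot)$ by means of a \emph{$t$-independent} splitting of $\calD$ into its ``boundary part'' $\ker(\mu_0-A_m(t))$ and its ``interior part'' $D:=\ker G=D(A(0))$. The whole point of $(\textbf{H4})$ is that it freezes the boundary part together with the associated Dirichlet lift, so that all of the $t$-dependence of $\calA(t)$ is carried by $A(t)$ acting on the \emph{fixed} space $D$, where continuity is available by hypothesis.

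\emph{Step 1 (setup).} Since $(\textbf{H1})$--$(\textbf{H3})$ hold at every $t$, Theorem \ref{theorem:Hadd_Manzo_Rhandi} shows that each $\calA(t)$ is a closed operator with domain exactly $\calD$; hence the graph norms of the $\calA(t)$ on $\calD$ are pairwise equivalent (the identity between any two of these complete norms is closed, hence bounded), so $\calL(\calD,X)$ is unambiguous and we fix on $\calD$ the graph norm of $\calA(0)$. By $(\textbf{H4})$ the subspace $V:=\ker(\mu_0-A_m(t))$ does not depend on $t$, and $\mu_0\in\rho(A(0))$ because $\mu_0>\om_0(A(0))$, so $G|_V\colon V\to U$ is an isomorphism by Greiner; therefore $\D_{\mu_0}(t)=(G|_V)^{-1}=:\D_{\mu_0}\in\calL(U,X)$ is the \emph{same} operator for all $t$. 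I will use that $G\D_{\mu_0}=I_U$, that $\D_{\mu_0}Gv=v$ for $v\in V$, and that $A_m(t)\D_{\mu_0}u=\mu_0\D_{\mu_0}u$ for every $t$ and $u\in U$, since the range of $\D_{\mu_0}$ equals $V\subset\ker(\mu_0-A_m(t))$.

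\emph{Step 2 (interior projection).} Define $J\colon\calD\to D$ by $Jx:=x-\D_{\mu_0}Kx$. For $x\in\calD$ one has $Kx=Gx$, hence $G(Jx)=Gx-G\D_{\mu_0}Kx=0$, so indeed $Jx\in\ker G=D$. The key lemma is that $J$ is bounded, which I would obtain from the closed graph theorem: if $x_n\to x$ in $\calD$ and $Jx_n\to y$ in $D$, then $\D_{\mu_0}Kx_n=x_n-Jx_n\to x-y=:w$ in $X$; moreover $A_m(0)\D_{\mu_0}Kx_n=\mu_0\D_{\mu_0}Kx_n$ and $A_m(0)(x_n-\D_{\mu_0}Kx_n)=A(0)Jx_n=\calA(0)x_n-\mu_0\D_{\mu_0}Kx_n$ both converge in $X$, so closedness of $A_m(0)$ forces $w\in V$, while $G w=Gx-Gy=Kx-0=Kx$; since $w\in V$ this gives $w=\D_{\mu_0}(Gw)=\D_{\mu_0}Kx$, i.e. $y=x-\D_{\mu_0}Kx=Jx$. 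Hence $J\in\calL(\calD,D)$.

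\emph{Step 3 (conclusion).} For $x\in\calD$ and any $t$, using $(\textbf{H4})$ and $Jx\in D=D(A(t))$,
\begin{equation*}
\calA(t)x=A_m(t)x=A_m(t)\D_{\mu_0}Kx+A_m(t)(Jx)=\mu_0\D_{\mu_0}Kx+A(t)Jx,
\end{equation*}
so the boundary term $\mu_0\D_{\mu_0}Kx$ is independent of $t$ and $\calA(t)x-\calA(s)x=(A(t)-A(s))Jx$. Consequently
\begin{equation*}
\|\calA(t)-\calA(s)\|_{\calL(\calD,X)}\le\|J\|_{\calL(\calD,D)}\,\|A(t)-A(s)\|_{\calL(D,X)},
\end{equation*}
and the right-hand side tends to $0$ as $t\to s$ since $A(\cdot)\in C([0,T],\calL(D,X))$; this in fact transfers the modulus of continuity as well. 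The one genuinely non-formal point is the boundedness of $J$ in Step 2 --- that the Dirichlet lift $x\mapsto\D_{\mu_0}Kx$ is bounded on $\calD$ --- which is where completeness of $\calD$, closedness of $A_m(0)$ and $A(0)$, and the identity $Kx=Gx$ on $\calD$ are used together; everything else is bookkeeping resting on Theorem \ref{theorem:Hadd_Manzo_Rhandi} and $(\textbf{H4})$.
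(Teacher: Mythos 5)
Your proof is correct and follows essentially the same route as the paper: both rest on the $t$-independent splitting $\calA(t)x=\mu_0\D_{\mu_0}Kx+A(t)(I-\D_{\mu_0}K)x$ on $\calD$, which is exactly the paper's factorization $\calA(t)-\mu_0=(A(t)-\mu_0)(I-\D_{\mu_0}C_\Lambda)$ since $C_\Lambda=K$ on $Z$. The only difference is how the boundedness of $J=I-\D_{\mu_0}K\colon\calD\to D$ is secured: you invoke the closed graph theorem, whereas the paper sidesteps this step by equipping $\calD$ with the norm $\Vert(\mu_0-\calA(0))\cdot\Vert$ and $D$ with $\Vert(\mu_0-A(0))\cdot\Vert$, for which the factorization gives $\Vert x\Vert_{\calD}=\Vert Jx\Vert_{D}$ and hence the estimate $\Vert\calA(t)-\calA(s)\Vert_{\calL(\calD,X)}\le\Vert A(t)-A(s)\Vert_{\calL(D,X)}$ with constant $1$.
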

\begin{remark}
	\begin{itemize}
		\item[(i)] Assumption \textbf{(H4)} implies that $\D_{\mu_0}(t)$ does not depend on $t$ and we set $\D_{\mu_0} = \D_{\mu_0}(t)$ for all $t\in [0,T]$. Hence, from the discussion above
		\item[(ii)] As for autonomous case, the definition of $B(t)$ does not depend on $\lambda$, so we can consider $B(t) = (\mu_0-A_{-1}(t))\D_{\mu_0}$ for all $t\in [0, T]$. Thanks to Theorem \ref{theorem:Hadd_Manzo_Rhandi}, we have
		\begin{equation*}
		\calA(t) = A_{-1}(t)+B(t)C_\wedge\qquad\hbox{for all}\qquad t\in [0,T].
		\end{equation*}
		By injecting $B(t)$ into above, we obtain
		\begin{equation*}
		\calA(t)-\mu_0 = A_{-1}(t)-\mu_0+(\mu_0-A_{-1}(t))\D_{\mu_0}C_\wedge = (A_{-1}(t)-\mu_0)(I-\D_{\mu_0}C_\wedge).
		\end{equation*}
		A simple calculation leads to $x\in D(\calA(t))\Leftrightarrow (I-\D_{\mu_0}M)x\in D(A(t))$. Hence
		\begin{equation}\label{expression of calA}
		\calA(t)-\mu_0  = (A(t)-\mu_0)(I-\D_{\mu_0}C_\wedge).
		\end{equation}
	\end{itemize}
\end{remark}
\begin{proof} (of Proposition \ref{theorem:continuity perturbation})
	As consequences of the continuity of $A(\cdot)$ on $[0,T]$ (see \cite{PruSchn} page 409): the type of of the semigroup generated by $A(t)$ does not depend on $t$ and the graph norms of the operators $A(t)$ are uniformly equivalent (and so we equip $D$ with the norm $\|\cdot\|_D = \|(\mu_0- A(t))\cdot\|$). Thus, the completions of $X$ with respect to the norms $\|\cdot\|_{-1}^{A(t)}:=\|R(\la,A(t))\cdot \|$ with $t\in [0,T]$ and $\la\in \rho(A(t))$ are constant and denoted simply $X_{-1}$. Denote by $A_{-1}(t)$ the extension of $A(t)$ to $X_{-1}$. Assumptions $(\bf {H1}$), ($\bf {H2}$) and $(\bf {H3})$ and  Theorem \ref{theorem:Hadd_Manzo_Rhandi} imply that for all $t\in [0,T]$ the operator $\calA(t)$ generates a strongly continuous semigroup on $X$. Then we equip $\calD$ with the norm $\| \cdot \|_\calD := \|(\mu_0 - \calA(0))\cdot\|$. Put $\calP_{\mu_0}:= I-\D_{\mu_0}C_\wedge$ and define on $D$ the norm $\| \cdot \|_D := \|(\mu_0 - A(0))\cdot\|$. By virtue of \eqref{expression of calA} for all $x\in \calD$ we have
	\begin{align*}
		\|x\|_\calD &= \|(\mu_0 - \calA(0))x\|\cr
		&= \|(\mu_0 - A(0))\calP_{\mu_0}x\|\cr
		&= \|\calP_{\mu_0}x\|_D.\cr
	\end{align*}
	On the other hand, for every $t,s\in [0,T]$ we have
	\begin{align*}
	\|\calA(t) - \calA(s)\|_{\calL(\calD,X)} &= \sup_{\|x\|_\calD = 1} \|\calA(t)x - \calA(s)x\|\cr
	&=\sup_{\|x\|_\calD = 1} \|(A(t) - A(s))\calP_{\mu_0}x\|\cr
	&=\sup_{\|\calP_{\mu_0}x\|_D = 1} \|(A(t) - A(s))\calP_{\mu_0}x\|\cr
	&\leq \sup_{\|z\|_D = 1} \|(A(t) - A(s))z\|\cr
	& = \|A(t) - A(s)\|_{\calL(D,X)}.\cr
   \end{align*}
Thus, the continuity of $\calA(\cdot)$ can be deduced from the one of $A(\cdot)$.
\end{proof}
Similarly, the continuity of $\calA(\cdot)$ on $[0,T]$ (see \cite{PruSchn} page 409) yields the graph norms of the operators $\calA(t)$ are uniformly equivalent. In particular the type of of the semigroup generated by $\calA(t)$ does not depend on $t$.\\
\\
Now, we present our perturbation result for the non-autonomous maximal regularity.
\begin{theorem}\label{theorem:nonautonomous staffans weiss}
	Let assumptions of Proposition \ref{theorem:continuity perturbation} be satisfied. If for every $t\in [0,T]$, $A(t)$ has maximal $L^p$-regularity and $A(\cdot) \in C([0,T],\calL(D,X))$. Then the family $\{\calA(t), t\in [0,T] \}$ has maximal $L^p$-regularity if one of the following conditions holds:
\begin{itemize}
\item [{\rm (i)}]$X$ is a non reflexive Banach space and for every $t\in [0,T]$, there exists $\la_0\in\R$ such that
\begin{equation*}
\sup_{\lambda>{\lambda_0}}\Vert \lambda \mathbb{D}_\lambda(t) \Vert < +\infty.
\end{equation*}
\item [{\rm (ii)}] $X$ and $U$ are UMD spaces, the  sets \begin{equation*}\{s^\frac{1}{p} R(\om+is,A_{-1}(t))B(t);s\neq 0\}\;\text{and}\; \{s^\frac{1}{q} C R(\om+is,A(t));s\neq 0\},\; t\in [0,T],\end{equation*} are $\mathcal{R}$-bounded for some $\om >\max\{\om_0(A(0)),\om_0(\calA(0))\}$.
\end{itemize}
\end{theorem}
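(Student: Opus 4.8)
The plan is to reduce the non-autonomous statement to the autonomous perturbation theorems proved in Section \ref{sec:4} together with the abstract non-autonomous criterion of Pr\"uss--Schnaubelt (Theorem \ref{theorem:Pruss-Schnaubelt}). To apply Theorem \ref{theorem:Pruss-Schnaubelt} to the family $\{\calA(t),t\in[0,T]\}$, I need to verify four things: (a) each $\calA(t)$ generates an analytic semigroup of negative type on $X$; (b) the domains are constant, $D(\calA(t))=\calD$ for all $t$; (c) the map $t\mapsto\calA(t)$ lies in $C([0,T],\calL(\calD,X))$ where $\calD$ carries the graph norm of $\calA(0)$; (d) each $\calA(t)$ has maximal $L^p$-regularity on $X$.

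Items (b) and (c) are already in hand: (b) holds by definition of $\calD$ and by Theorem \ref{theorem:Hadd_Manzo_Rhandi}, and (c) is precisely the content of Proposition \ref{theorem:continuity perturbation}, which applies because assumptions (H1)--(H4) and $A(\cdot)\in C([0,T],\calL(D,X))$ are hypotheses here. For (a), I first note that each $A(t)$ has maximal $L^p$-regularity, hence generates an analytic semigroup; by Theorem \ref{theorem:staffans-weiss-analytic} each $\calA(t)$ then generates an analytic semigroup as well. Negativity of the type is not a loss of generality: by Remark \ref{remark:Maximal_Regularity_Operator}(ii) we may replace $A(t)$ by $A(t)-\lambda$ (and correspondingly $\calA(t)$ by $\calA(t)-\lambda$) for a fixed large $\lambda$ without affecting maximal regularity, and after the continuity remark following Proposition \ref{theorem:continuity perturbation} the type of the semigroup generated by $\calA(t)$ is independent of $t$, so a single shift makes it uniformly negative.

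The crux is item (d): showing that each individual operator $\calA(t)$ has maximal $L^p$-regularity. This is exactly where the two alternative hypotheses enter. Fix $t\in[0,T]$. Under hypothesis (i), $X$ is non-reflexive and $\sup_{\lambda>\lambda_0}\|\lambda\D_\lambda(t)\|<\infty$, so Theorem \ref{theorem:Staffans_Weiss_Non_Reflexive_Case} applied with $A:=A(t)$, $B:=B(t)$, $C:=C$ (whose hypotheses (H1)--(H3) hold for each $t$ by assumption, and which has maximal $L^p$-regularity) gives $\calA(t)\in\mathscr{MR}(0,T;X)$. Under hypothesis (ii), $X,U$ are UMD and the two families of operators are $\mathcal{R}$-bounded for the given $\om$; since $A(t)$ has maximal $L^p$-regularity it generates an analytic semigroup, which after a shift is bounded analytic, so Theorem \ref{theorem:staffans-weiss-R-boundedness} applies with $A:=A(t)$ and yields $\calA(t)\in\mathscr{MR}(0,T;X)$. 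In either case all four hypotheses of Theorem \ref{theorem:Pruss-Schnaubelt} are met for the family $\{\calA(t),t\in[0,T]\}$, and we conclude that this family has maximal $L^p$-regularity on $X$.

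The main obstacle, and the place where a little care is required, is making the reductions uniform in $t$: Theorem \ref{theorem:Pruss-Schnaubelt} is stated for families of negative type with a common domain and a genuinely continuous coefficient map, so one must check that the shift used to achieve negativity and the identifications of the extrapolation space $X_{-1}$ and the domain $\calD$ can be carried out simultaneously for all $t$. This is supplied by the observations recorded around Proposition \ref{theorem:continuity perturbation} (uniform equivalence of graph norms of $A(t)$ and of $\calA(t)$, constancy of $X_{-1}$, $t$-independence of the type), so no genuinely new estimate is needed — the proof is essentially an assembly of the autonomous results with the non-autonomous continuity criterion.
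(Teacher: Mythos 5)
Your proposal is correct and follows essentially the same route as the paper: Proposition \ref{theorem:continuity perturbation} gives $\calA(\cdot)\in C([0,T],\calL(\calD,X))$, Theorem \ref{theorem:Staffans_Weiss_Non_Reflexive_Case} (resp.\ Theorem \ref{theorem:staffans-weiss-R-boundedness}) gives the pointwise maximal $L^p$-regularity of each $\calA(t)$ under hypothesis (i) (resp.\ (ii)), and Theorem \ref{theorem:Pruss-Schnaubelt} assembles these into the non-autonomous conclusion. Your additional checks on analyticity, the uniform negativity of the type via a single shift, and the constancy of the domain are points the paper leaves implicit but are handled exactly as you describe.
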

\begin{proof}
	Thanks to Proposition \ref{theorem:continuity perturbation}, we have $\calA(\cdot) \in C([0,T],\calL(\calD,X))$. If condition (i) (resp. (ii)) is satisfied then by virtue of Theorem \ref{theorem:Staffans_Weiss_Non_Reflexive_Case} (resp. Theorem \ref{theorem:staffans-weiss-R-boundedness}) for the operator $\calA(t)$ has maximal $L^p$-regularity every $t\in [0,T]$. Applying Theorem \ref{theorem:Pruss-Schnaubelt}, we obtain the desired result.
\end{proof}
\begin{remark}
	Let $A_m, A, \calA, B$ and $C$ be as in Section \ref{sec:4} and let assumptions of Theorem \ref{theorem:staffans-weiss-R-boundedness} be satisfied. We define the operators $A_m(t) = a(t)A_m$ , $A(t) = a(t)A$ and $\calA(t) = a(t) \calA$ where $a\in C([0,T])$ with $a(t)>\alpha>0$ for all $ t\in [0,T]$. Then clearly, all assumptions of Theorem \ref{theorem:nonautonomous staffans weiss} are verified. Moreover, condition (ii) is also satisfied and then the family $\{\calA(t), t\in [0,T]  \}$ has maximal $L^p$-regularity. So, Theorem \ref{theorem:nonautonomous staffans weiss} asserts the maximal regularity of the non-autonomous version associated to our example considered in Section 4.3.1.
\end{remark}

\medskip

\medskip

\end{document}